\newtheorem{thm}{Theorem}[section]
\newtheorem*{thm*}{Theorem}
\newtheorem*{lemma*}{Lemma}
\newtheorem{claim}[thm]{Claim}
\newtheorem*{claim*}{Claim}
\theoremstyle{definition}
\newtheorem*{define*}{Definition}
\newtheorem*{note*}{Notation}
\renewenvironment{proof}[1][]{\begin{trivlist}
\item[\hspace{\labelsep}{\bf\noindent Proof#1.\/}] }{\qed\end{trivlist}}
\newcommand{\remove}[1]{}
\renewcommand{\r}{\tilde{r}}
\renewcommand{\a}{\tilde{\alpha}}
\newcommand{\bs}{\setminus}
\title{Radon Numbers for Trees}
\author{Shoham Letzter
\thanks{Department of Pure Mathematics and Mathematical Statistics, Centre for Mathematical Sciences, Wilberforce
Road, Cambridge, CB3 0WB, UK. Email: s.letzter@dpmms.cam.ac.uk }
}
\begin{document}

\maketitle

\begin{abstract}
\setlength{\parindent}{0in} 
\setlength{\parskip}{.08in} 
\noindent
Many interesting problems are obtained by attempting to generalize classical results on convexity in Euclidean spaces to other convexity spaces, in particular to convexity spaces on graphs. 
In this paper we consider $P_3$-convexity on graphs. A set $U$ of vertices in a graph $G$ is \emph{$P_3$-convex} if every vertex not in $U$ has at most one neighbour in $U$.
More specifically, we consider Radon numbers for $P_3$-convexity in trees.

Tverberg's theorem states that every set of $(k-1)(d+1)-1$ points in $\mathbb{R}^d$ can be partitioned into $k$ sets with intersecting convex hulls. As a special case of Eckhoff's conjecture, we show that a similar result holds for $P_3$-convexity in trees.

A set $U$ of vertices in a graph $G$ is \emph{free}, if no vertex of $G$ has more than one neighbour in $U$.
We prove an inequality relating the Radon number for $P_3$-convexity in trees with the size of a maximal free set.  
\setlength{\parskip}{.1in} 
\end{abstract}

\section{Introduction}

Radon's classical lemma \cite{classical_radon} states that every set of $d+2$ points in $\mathbb{R}^d$ can be partitioned into two sets whose convex hulls intersect. Tverberg \cite{tverberg} generalized this to partitions into more than two sets. Namely, every set of at least $(k-1)(d+1)+1$  points in $\mathbb{R}^d$ can be partitioned into $k$ sets whose convex hulls have a point in common.

Inspired by this, Eckhoff  conjectured in \cite{part_conj} that the situation is similar in general convexity spaces.
A \emph{convexity space} is a pair $(X,\mathcal{C})$ where $X$ is a set and $\mathcal{C}$ is a collection of subsets of $X$, called \emph{convex sets}, such that $\emptyset$ and $X$ are convex and the intersection of convex sets is convex.
The \emph{convex hull} of a set $S\subseteq X$, denoted by $H_\mathcal{C}(S)$, is the minimal convex set containing $S$, i.e.~the intersection of all convex sets containing $S$.
For a set $S$, a $k$-Radon partition is a partition of $S$ into $k$ sets whose convex hulls have a point in common.
A set is \emph{$k$-anti Radon} (or $k$-a.r.) if it has no $k$-Radon partition.
The $k$th Radon number of $(X,\mathcal{C})$ is the minimal number (if it exists) $r_k(\mathcal{C})$ such that every set $S\subseteq X$ of size at least $r_k(\mathcal{C})$ has a $k$-Radon partition.
Eckhoff \cite{part_conj} conjectured that $r_k(\mathcal{C})\le (k-1)(r_2(\mathcal{C})-1)+1$ holds in every convexity space.
This conjecture has been proved in several convexity spaces including trees with geodesic convexity \cite{jamison}. However, the general conjecture has recently been disproved by Bukh \cite{bukh}.

The notion of a $k$-a.r~set can be generalized to multi-sets by considering partitions of multi-sets rather than sets. In this paper, we define  $\r_k(\mathcal{C})$ to be the size of the largest $k$-a.r.~multi-set. Note that $\r_k(\mathcal{C})\ge r_k(\mathcal{C})-1$, with equality for $k=2$ (as a $2$-a.r.~multi-set is a set, i.e.~no element can appear more than once).
When $k=2$, we often omit the prefix $k$, e.g.~a $2$-a.r.~set may be called an a.r.~set and $\r(\mathcal{C})=\r_2(\mathcal{C})$.

In this paper we shall study $P_3$-convexity in trees. For a graph $G$, a set $U$ of vertices of $G$ is \emph{$P_3$-convex} or, briefly, \emph{convex} if every vertex not in $U$ has at most one neighbour in $U$. Equivalently, $U$ is convex if it contains all middle vertices in the paths of length $2$  between two vertices of $U$.
$P_3$-convexity was first considered in the context of directed graphs and tournaments (see \cite{erdos},\cite{erdos2},\cite{moon},\cite{Varlet}).

Throughout this paper graphs are always finite, simple and undirected.
For a graph $G$, let $\r_k(G)$ denote the $k$th Radon number for $P_3$-convexity on $G$, and for a set $U\subseteq V(G)$, let $H_G(U)$ denote the convex hull of $U$ in $G$.

As the first main result of our paper, we show that Eckhoff's conjecture holds for $P_3$-convexity on trees.
\begin{thm}
\label{partition}
Let $T$ be a tree, $k\ge 3$. Then $\tilde{r}_k(T)\le (k-1)\tilde{r}_2(T)$.
\end{thm}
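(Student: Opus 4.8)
The plan is to prove the theorem by induction on $k$, the crux being the one‑step recursion
\[
\tilde{r}_k(T)\;\le\;\tilde{r}_{k-1}(T)+\tilde{r}_2(T)\qquad(k\ge 3),
\]
since applying it repeatedly down to $k=3$ (where it already reads $\tilde{r}_3(T)\le 2\tilde{r}_2(T)$) yields $\tilde{r}_k(T)\le(k-2)\tilde{r}_2(T)+\tilde{r}_2(T)=(k-1)\tilde{r}_2(T)$. So essentially all of the work is in this reduction.

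Before attacking it I would record the combinatorics of $P_3$-convexity in a tree. Two facts do the heavy lifting. First, $U\subseteq V(T)$ is convex exactly when the components of the induced forest $T[U]$ are pairwise at distance at least $3$ in $T$; consequently $H_T(U)$ is the closure of $U$ under adding a vertex having two neighbours already in the set, and in a tree such a vertex is precisely the one lying between two components currently at distance $2$. Second — and this is the tool I expect to use repeatedly — there is a recursive \emph{branch criterion}: if $v\in V(T)$ and $C_1,\dots,C_d$ are the components of $T-v$, with $n_t$ the neighbour of $v$ inside $C_t$, then $v\in H_T(U)$ if and only if $v\in U$, or $n_t\in H_{T[C_t]}(U\cap C_t)$ for at least two indices $t$. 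This reduces membership in a convex hull, hence any statement about $k$-Radon partitions (which are governed by a common point of $k$ hulls), to a local condition at one vertex plus recursion into the branches hanging off it; I would expect the reduction lemma itself to be proved by induction on $|V(T)|$ using this criterion.

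For the reduction, let $S$ be a $k$-a.r.\ multiset; I want to bound $|S|$. Choose an inclusion‑maximal sub‑multiset $S_1\subseteq S$ that is $(k-1)$-a.r., and set $S_2=S\bs S_1$; the goal is to show $S_2$ is $2$-a.r., for then $|S|=|S_1|+|S_2|\le\tilde{r}_{k-1}(T)+\tilde{r}_2(T)$, as required. Suppose not, so $S_2$ has a $2$-Radon partition $S_2=B_1\sqcup B_2$ with common point $p$. By maximality of $S_1$, for every $b\in S_2$ the multiset $S_1\cup\{b\}$ admits a $(k-1)$-Radon partition, say with common point $q_b$. The aim is to splice such a partition together with the $2$-Radon partition of $S_2$ into a $k$-Radon partition of $S$, contradicting the choice of $S$: if one can arrange that a $(k-1)$-Radon partition of $S\bs S_1$ (possibly after moving a single element across) has its common point lying in $H_T(S_1)$, then appending $S_1$ as a $k$-th part — and dumping the rest of $S_1$ into the pre‑existing parts, whose hulls still contain that common point — finishes the job.

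The main obstacle is precisely this alignment of common points: one must rule out the case in which every such $(k-1)$-Radon partition has its common point outside $H_T(S_1)$. I would attack this with the branch criterion, fixing a candidate common point $v$ chosen as deep as possible inside the relevant hulls (so that no branch at $v$ carries a hidden obstruction), and observing that at $v$ the various conditions ``$v$ lies in the hull of this part'' collapse to a one‑dimensional bookkeeping over the branches $C_1,\dots,C_d$ — each branch either reaches $n_t$ from a given part or it does not. Running the classical $d=1$ instance of Tverberg's theorem on this bookkeeping should force the parts to be redistributable so that $v$, or some vertex of $H_T(S_1)$, becomes a common point of a $k$-part refinement. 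Verifying the two structural facts, handling the induction bookkeeping, and the final assembly of the partition I expect to be routine once this localisation step is in place.
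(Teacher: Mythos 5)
Your overall skeleton (induction on $k$ via the recursion $\tilde r_k(T)\le \tilde r_{k-1}(T)+\tilde r_2(T)$) is a genuinely different route from the paper, which instead does induction on $|V(T)|$, strengthens the statement to a rooted quantity $\tilde r_k^*(T,v)$ (the largest $k$-a.r.\ multi-set whose hull avoids $v$), and performs a case analysis on the structure of $T$ near the second and third vertices of a longest path, deleting leaves and re-weighting them as copies of their neighbour. Your two preliminary structural facts about $P_3$-hulls in trees are correct and are indeed implicit in the paper (the branch criterion is essentially the paper's recursion (2) for $\tilde r_k^*$). The problem is that the heart of your argument is missing.

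The gap is the step you yourself flag as ``the main obstacle'': the alignment of common points. Having chosen a maximal $(k-1)$-a.r.\ sub-multiset $S_1$ and supposed that $S_2=S\setminus S_1$ has a $2$-Radon partition $B_1\sqcup B_2$ with common point $p$, you need to produce a $k$-Radon partition of $S$. For that you must either (i) find a $(k-2)$-Radon partition of $S_1$ whose common point is $p$ (so that $B_1,B_2$ plus those parts work), or (ii) find a $(k-1)$-Radon partition of $S\setminus B_2$ (say) whose common point lies in $H_T(B_2)$. Maximality of $S_1$ only gives you, for each $b\in S_2$, \emph{some} $(k-1)$-Radon partition of $S_1\cup\{b\}$ with \emph{some} common point $q_b$, and nothing forces any $q_b$ to coincide with $p$ or to lie in the hull of the leftover part. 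This is precisely the point at which the analogous reduction fails for general convexity spaces --- it is why Eckhoff's conjecture resisted proof and was ultimately disproved by Bukh --- so any repair must use the tree structure in an essential, quantitative way. Your proposed fix (``fix a deepest candidate common point $v$, reduce to bookkeeping over the branches at $v$, and run one-dimensional Tverberg'') is a plausible place to look, but as written it is a hope rather than an argument: you do not specify how the branch counts of $S_1$ relate to those of $B_1,B_2$, nor why the redistribution preserves the partition structure of the \emph{multi-set} $S$ (elements moved between parts must still come from distinct copies). Until that localisation step is carried out in detail, the recursion $\tilde r_k\le \tilde r_{k-1}+\tilde r_2$ is unproved, and with it the theorem.
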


Given a graph $G$,  call a set $A\subseteq V(G)$ \emph{free} if every vertex of $G$ has at most one neighbour in $A$. Note that every free set in a graph $G$ is also convex and the converse does not hold in general.
Let $\tilde{\alpha}(G)$ be the size of the largest free set in $G$.
It follows that $\tilde{r}(G)\ge\tilde{\alpha}(G)$.
Our second main theorem answers a question posed by Dourado et al. \cite{radon}.
\begin{thm}\label{free}
Let $T$ be a tree. Then 
$\r_2(T)\le 2\a(T)$.
\end{thm}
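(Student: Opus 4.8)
The plan is to prove the equivalent inequality $\tilde\alpha(T)\ge \tilde r_2(T)/2$: fix a $2$-anti-Radon set $S$ with $|S|=\tilde r_2(T)$ and produce a free set of size at least $|S|/2$. I would first extract the structure forced by ``$S$ is $2$-a.r.''. Applying the definition to partitions of the form $A\sqcup (S\setminus A)$ shows that $H_T(A)\cap H_T(B)=\emptyset$ for every pair of disjoint $A,B\subseteq S$; taking $B$ to be a single vertex gives $H_T(A)\cap S=A$ for every $A\subseteq S$, so no vertex of $S$ lies in the convex hull of the others. Two consequences: (a) if a vertex $v\notin S$ had four or more neighbours in $S$, then two disjoint pairs of them would have convex hulls both containing $v$, which is impossible; call a vertex with exactly two or three neighbours in $S$ a \emph{centre}, with \emph{cluster} $N(v)\cap S$; (b) since $T$ is a tree, any two clusters share at most one vertex, and the bipartite graph recording $T$-adjacencies between centres and $S$ is a forest (a cycle in it would be a cycle in $T$). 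Consequently the distance-two graph $G_S$ on $S$ (with $s\sim s'$ iff $\dist_T(s,s')=2$) is the edge-disjoint union of the cliques $\binom{C}{2}$ over clusters $C$, each of size $2$ or $3$, glued along cut vertices without forming cycles; that is, $G_S$ is a block graph all of whose blocks are edges or triangles. Note that a subset of $S$ is free exactly when it is independent in $G_S$, so if $G_S$ has no edge then $S$ itself is free and we are done.

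Otherwise I would build a free set by ``paying'' for each block. Writing $b_2,b_3$ for the numbers of edge- and triangle-blocks and $\gamma$ for the number of components of $G_S$, a short count on the block-cut tree gives $|S|=b_2+2b_3+\gamma$. Choose an independent set $I$ of $G_S$ containing one vertex of each triangle-block and of size at least $(b_2+\gamma)/2$ (a routine bound for block graphs whose blocks have size at most three), and set $A=I\cup\{m_B:B\text{ a triangle-block}\}$, where $m_B$ denotes the centre of $B$. The centres $m_B$ are distinct vertices outside $S$, so $|A|=|I|+b_3\ge (b_2+\gamma)/2+b_3=|S|/2$. Granting that $A$ is free, this yields exactly the required factor of two, which matches the extremal double stars $S_{a,b}$ with $a,b\ge 2$, for which $\tilde r_2=2\tilde\alpha$.

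The crux is verifying that $A$ is indeed free. Independence in $G_S$ excludes distance-two pairs inside $S$, but a chosen centre $m_B$ may still lie at distance two in $T$ from a chosen vertex $s$: through a common neighbour that is a cluster-vertex of $B$ (equivalently, $s$ is $T$-adjacent to a cluster-vertex of $B$), or through a neighbour of $m_B$ outside $S$; and two chosen centres may collide through a shared cut-vertex of their blocks. I expect this to be the main obstacle, and I would resolve it by refining the selection rather than the scheme: suppress $m_B$ whenever such a collision occurs, and recover the lost unit of the count from the rigid local configuration that a $2$-a.r.\ set forces around an $S$-vertex adjacent to a cluster-vertex. A more pedestrian alternative is induction on $|V(T)|$ by leaf deletion: removing a leaf $\ell$ gives the bound at once unless every maximum $2$-a.r.\ set contains $\ell$ while $\tilde\alpha(T-\ell)=\tilde\alpha(T)$, and settling that residual case (exhibited already by paths and double stars) is exactly where the structure theory above, or a strengthened induction hypothesis recording the equality configurations, must be invoked.
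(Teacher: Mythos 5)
Your reduction of ``free'' to ``independent in the distance-two graph $G_S$'' and the block-graph structure of $G_S$ are correct, and the ledger $|S|=b_2+2b_3+\gamma$ does make the numbers come out to exactly $|S|/2$. But the proof has a genuine gap precisely where you flag it: the set $A=I\cup\{m_B\}$ is in general \emph{not} free, and the repair you sketch is not an oversight you can patch locally --- it is the entire difficulty of the theorem. Concretely, take $T$ with a vertex $w$ adjacent to two vertices $m,m'$, where $m$ has three leaf-neighbours $s_1,s_2,s_3$ and $m'$ has three leaf-neighbours $t_1,t_2,t_3$. The six leaves form a $2$-a.r.\ set (all your local conditions hold: two disjoint triangle-clusters, no vertex with four $S$-neighbours), yet any $A$ containing both centres $m,m'$ fails to be free because $w$ sees both. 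Suppressing one centre happens to still give $3=|S|/2$ here, but your scheme has no mechanism guaranteeing this in general: if $k$ triangle-centres pairwise collide through a common neighbour you lose $k-1$ units, and recovering them requires showing that such configurations force a Radon partition (for three triangle-clusters around a common neighbour $w$ one can indeed exhibit a partition whose two hulls both capture $w$ and then a centre, so $S$ would not be a.r.\ --- but that is a nontrivial global consequence of the a.r.\ hypothesis, not part of the ``rigid local configuration'' you derived). The same issue arises for collisions between a centre $m_B$ and a selected $S$-vertex through a neighbour of $m_B$ outside $S$. In short, the local structure you extract from the a.r.\ property (no $S$-vertex in the hull of the others, clusters of size at most $3$, $G_S$ a block graph) does not characterize a.r.\ sets, and the missing implications are exactly what you would need to make ``suppress and recover'' balance. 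Your fallback --- induction by leaf deletion --- is the paper's actual strategy, but as you note, the residual case (every maximum a.r.\ set contains every leaf) is where all the work lies; the paper handles it by strengthening the induction hypothesis to $\tilde{r}^*(T,v)\le 2\tilde{\alpha}^*(T,v)$ for rooted subtrees and running a long case analysis on the neighbourhood of the third and fourth vertices of a longest path. Neither of your two routes is carried to completion, so the argument as it stands does not prove the theorem.

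As a secondary point, two of your supporting claims deserve more care even if the freeness issue were resolved: the existence of an independent set $I$ in $G_S$ meeting every triangle-block and of size at least $(b_2+\gamma)/2$ is plausible but not quite ``routine'' once triangle-blocks and edge-blocks share cut vertices, and your proposed equality examples (double stars $S_{a,b}$, $a,b\ge 2$) are only extremal for $a=b=2$; for $a=3$ the full leaf set already admits a Radon partition, and the paper's sharpness examples are spiders of a different shape.
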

We shall show that this theorem is sharp in the sense that there are infinitely many trees for which we have equality.

The last inequality is not true in general as shown by the graph $G_1$ in Figure \ref{fig_ex_free}.
Every two of the seven vertices of $G_1$ have a common neighbour, hence $\a(G_1)=1$.
It is easy to check that the set $A=\{2,4,6\}$ of vertices of $G_1$ is a.r.~and that every set of $4$ vertices of $G_1$ is not a.r., therefore $\r(G_1)=3$.

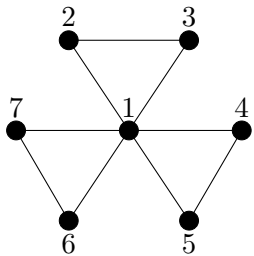
\begin{figure}[ht]\centering
\begin{tikzpicture}
\tikzstyle{bel}=[below=0.05]
\draw[fill](0,0) circle [radius=0.125];
\draw[fill](1.5,0) circle [radius=0.125];
\draw[fill](-1.5,0) circle [radius=0.125];
\draw[fill](-.8,1.2) circle [radius=0.125];
\draw[fill](.8,1.2) circle [radius=0.125];
\draw[fill](.8,-1.2) circle [radius=0.125];
\draw[fill](-.8,-1.2) circle [radius=0.125];

\draw(0,0)--(.8,1.2)--(-.8,1.2)--(0,0)--(1.5,0)--(.8,-1.2)--(0,0)--(-1.5,0)--(-.8,-1.2)--(0,0); 

\node[above=0.05] at (0,0) {$1$};
\node[above=0.05] at (-.8,1.2) {$2$};
\node[above=0.05] at (.8,1.2) {$3$};
\node[above=0.05] at (1.5,0) {$4$};
\node[bel] at (.8,-1.2) {$5$};
\node[bel] at (-.8,-1.2) {$6$};
\node[above=0.05] at (-1.5,0) {$7$};

\end{tikzpicture}
\caption{Graph $G_1$}
\label{fig_ex_free}
\end{figure}

We prove Theorem \ref{partition} in Section \ref{section_partition} and Theorem \ref{free} in Section \ref{section_free}.

\section{Proof of Theorem \ref{partition}}\label{section_partition}
Before proving this theorem, we introduce some notation.
For a graph $G$ and a vertex $v\in V(G)$, define
\begin{equation}\label{defn_radon*}
\r_k^*(G,v)=\max\{|R|\!: R\text{ is a }k\text{-a.r.~multi-set and }v\notin H_G(R) \}.
\end{equation}

\begin{proof}[ of Theorem \ref{partition}]
We shall prove more than claimed in the statement of the theorem. Namely, we shall show that for every tree $T$
\begin{itemize}
\item
$\r_k^*(T,v)\le (k-1)\r_2^*(T,v)$ holds for every $v\in V(T)$,
\item
$\r_k(T)\le (k-1)\r_2(T)$.
\end{itemize}
Our proof is by induction on $n=|V(T)|$.
Both statements are clear for $n\le 2$.

Let $T$ be a tree with $n\ge 3$ vertices.
The second statement follows easily by induction using expression \ref{eqn_radon} for $\r_k^*$ below.
For a vertex $v\in V(T)$, let $v_1,\ldots,v_l$ be its neighbours, and for every $i\in [l]$ let $T_i$ be the component of $v_i$ in $T\setminus\{v\}$. Then
\begin{equation}\label{eqn_radon}
\r_k^*(T,v)=\max_{i\in [l]}(\sum\limits_{j\neq i}\r_k^*(T_j,v_j)+\r_k(T_i)).
\end{equation} 

We now prove that $\r_k(T)\le (k-1)\r_2(T)$.
Let $R$ be a $k$-a.r.~multi-set of maximal size.
If T has endvertex $v$ which is not in $R$, let $T'=T\setminus \{v\}$. Then by induction,
\begin{equation*}
|R|=\r_k(T')\le (k-1)r_2(T')\le (k-1)r_2(T).
\end{equation*}
Thus we may assume that $R$ contains each endvertex of $T$ at least once.

In the rest of the proof we consider two possible cases which will be dealt with in different subsections.
\subsection{Case 1.}

There is a longest path $v_1,\ldots,v_m$ in $T$ such that $\deg(v_2)\ge 3$.
\\

Let $z=v_3$, $y=v_2$ and $x_1,\ldots,x_l$ be the neighbours of $y$ other than $z$. Note $l\ge 2$, and by the choice of $v_1,\ldots,v_m$ as a longest path, $x_1,\ldots ,x_l$ are all endvertices (see Figure \ref{fig_radon_1}).

\begin{figure}[ht]\centering
\begin{tikzpicture}
\tikzstyle{bel}=[below=0.05]

\draw[fill](0,0) circle [radius=0.125];
\draw[fill](1,0) circle [radius=0.125];
\draw[fill](-1,1.5) circle [radius=0.125];
\draw[fill](-1,0.5) circle [radius=0.125];
\draw[fill](-1,-1.5) circle [radius=0.125];

\draw (-1,.5)--(0,0)--(-1,1.5);
\draw (0,0)--(1,0)--(1.8,0.4);
\draw (1,0)--(1.8,-.7);
\draw (-1,-1.5)--(0,0);

\draw [fill](-1,0.15) circle [radius=0.025];
\draw [fill](-1,-0.1) circle [radius=0.025];
\draw [fill](-1,-0.35) circle [radius=0.025];
\draw [fill](-1,-0.6) circle [radius=0.025];
\draw [fill](-1,-0.85) circle [radius=0.025];
\draw [fill](-1,-1.1) circle [radius=0.025];

\node[left] at (-1,1.5){$x_1$};
\node[left] at (-1,.5){$x_2$};
\node[left] at (-1,-1.5){$x_l$};

\node [bel] at (0,0){$y$};
\node [bel] at (1,0){$z$};d

\boldmath
\node at (0.1,0.8){$T'$};
\node at (1.1,0.6){$T''$};
\draw [rounded corners=25pt](2,-1.2)--(-.5,-1.2)--(-.5,1.2)--(2,1.2);
\draw [rounded corners=20pt](2,-.9)--(.5,-.9)--(.5,.9)--(2,.9);

\end{tikzpicture}
\caption{Case 1}
\label{fig_radon_1}
\end{figure}

Denote by $s_i$, $i\in [l]$, the number of appearances of $x_i$ in $R$ and by $t$ the number of appearances of $y$ in $R$. By our assumption that $R$ contains every endvertex at least once, $s_i\ge 1$ for every $i\in [l]$.
As $R$ is $k$-a.r., $s_i,t\le k-1$.
Let $s=s_1+\ldots +s_l$.
We consider three cases according to the value of $s$.

\begin{enumerate}[(a)]
\item\label{case1a_partition}
$s\le 2k-2$.\\
Let $\tau=\min\{s,2k-2-s\}$, $\sigma=(s-\tau)/2$.
Note that $\sigma$ is an integer and $\tau+\sigma\le k-1$.
Set $T'=T\setminus\{x_1,\ldots ,x_l\}$ (see Figure \ref{fig_radon_1}). Let $R'$ be the multi-set obtained by adding $\sigma$ copies of $y$ to $R\cap V(T')$. Note $|R|=|R'|+s-\sigma=|R'|+\sigma+\tau\le |R'|+ k-1$.

\begin{claim}\label{pairing}
$R'$ is $k$-a.r..
\end{claim}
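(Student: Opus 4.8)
We need to show the multi-set $R'$, obtained from $R$ by removing all copies of the leaves $x_1,\dots,x_l$ and adding $\sigma$ copies of the vertex $y$, admits no $k$-Radon partition. The natural strategy is a proof by contradiction: suppose $R' = A'_1 \cup \dots \cup A'_k$ is a $k$-Radon partition, with a common point $p$ in all the convex hulls $H_{T'}(A'_j)$, and then lift this to a $k$-Radon partition of $R$ in $T$, contradicting the assumption that $R$ is $k$-a.r.

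**The lifting.** The plan is to redistribute the $s$ copies of the leaves $x_1,\dots,x_l$ (which live in $R$ but not $R'$) among the parts, in place of the $\sigma$ copies of $y$ we added. The key structural observation is that $y$ is the unique neighbour in $T$ of every $x_i$, so a leaf copy $x_i$ sitting in a part $A_j$ can only contribute to $H_T(A_j)$ by being "activated" together with another vertex of $A_j$ lying in $T'$ on the far side of $y$ — equivalently, $x_i \in H_T(A_j)$ relative to the rest only matters through $y$. Concretely, I would argue: whenever a part contains $y$, or contains at least two leaves from $\{x_1,\dots,x_l\}$, its convex hull in $T$ already contains $y$; and conversely the convex hull relation in $T'$ versus $T$ differs only in a controlled way near $y$. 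Since we have $\sigma$ spare copies of $y$ to remove and $s = 2\sigma + \tau$ leaf copies to insert, and since $\tau + \sigma \le k-1$ while each $x_i$ appears at most $k-1$ times, a counting/pigeonhole pairing lets us place the leaves so that: the $\tau$ "extra" leaf copies go into parts where the common point $p$ is already witnessed without them, and the remaining $2\sigma$ leaf copies are paired up, two per part, into $\sigma$ parts, each such pair forcing $y$ into that part's hull in $T$ — thereby replacing the role played by the removed copy of $y$. One must also check that $p$, which lies in $T'$, is still a valid common point: since $T' \subseteq T$ and hulls only grow, $p \in H_{T'}(A'_j) \subseteq H_T(A_j)$ provided $A'_j \subseteq A_j$ or the modification preserves the hull; the delicate case is when $p = y$ itself, handled by the fact that the $\sigma$ pairs of leaves reinstate $y$.

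**Main obstacle.** The crux — and where I'd spend the most care — is the bookkeeping that simultaneously respects three constraints: no part may receive $k$ or more copies of any single vertex (so the lifted partition is genuinely a valid partition into $k$ parts in the multi-set sense, though actually for a $k$-Radon \emph{partition} we just need $k$ nonempty parts, so the real constraint is that we don't collapse parts), the $2\sigma$ paired leaves must land two-to-a-part so as to generate $y$ in the hull, and the $\tau$ leftover leaves must land in parts whose hull contains the common point regardless. Making the pairing explicit requires knowing how the copies of $y$ in $R'$ were distributed among $A'_1,\dots,A'_k$ and how the leaf copies $s_1,\dots,s_l$ (each $\le k-1$, each $\ge 1$) can be bundled into pairs across distinct leaves or repeated leaves — here the inequality $s \le 2k-2$ is exactly what guarantees the bundling is feasible, and the definitions $\tau = \min\{s, 2k-2-s\}$, $\sigma = (s-\tau)/2$ are engineered so that $\sigma$ is a nonnegative integer with $\sigma + \tau \le k-1$. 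I expect the argument to split into a short case analysis depending on whether $p$ lies in the subtree $T''$ (the component of $z$ in $T \setminus \{y\}$, together with $y,z$) or equals $y$ or lies elsewhere, with the $p = y$ case being the one that genuinely uses the $\sigma$ reinstated pairs.
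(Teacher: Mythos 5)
Your strategy is the paper's: assume a $k$-Radon partition of $R'$ and lift it to one of $R$ by replacing each of the $\sigma$ added copies of $y$ with a pair of copies of two \emph{distinct} leaves $x_{a_j},x_{b_j}$ (so that $y$ re-enters that part's hull in $T$), with the remaining $\tau$ leaf copies scattered arbitrarily, which is harmless since hulls only grow. However, the one piece of genuine mathematical content in this claim is the existence of such a pairing, and you leave it at ``a counting/pigeonhole pairing lets us place the leaves.'' This is precisely where the paper does its work: it proves by induction on $k$ that if $s=s_1+\cdots+s_l\le 2k-2$ and each $s_i\le k-1$, then one can extract $\sigma$ disjoint pairs of copies of distinct leaves --- sort so that $s_1\ge s_2\ge\cdots\ge s_l$, pair one copy of $x_1$ with one copy of $x_2$, decrement both, and verify that the hypotheses persist with $k-1$ in place of $k$ (the point being that if some $s_i$ with $i\ge 3$ exceeded $k-2$ one would get $s\ge 3(k-1)>2k-2$). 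The essential inequality making the extraction feasible is $\max_i s_i\le k-1=s-\sigma$, i.e.~no single leaf hogs so many copies that a distinct partner runs out. As written, your proposal asserts this step rather than proving it, so it is an outline with the crux missing.

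A secondary remark: your case analysis on the location of the common point $p$ (and the worry about how the added copies of $y$ were distributed among the parts $A'_1,\ldots,A'_k$) is unnecessary. Whichever part held the $j$-th added copy of $y$ simply receives the $j$-th pair; since that pair forces $y$ back into the part's hull, one gets $H_T(R_j)\cap V(T')\supseteq H_{T'}(R'_j)$ for every part uniformly, so \emph{any} common point of the hulls $H_{T'}(R'_j)$, including $p=y$, survives the lift. No separate treatment of $p=y$ or of $p$ in the subtree beyond $z$ is needed.
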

\begin{proof}
We shall show the existence of sequences $a_1,\ldots ,a_{\sigma}$, $b_1,\ldots,b_{\sigma}$ satisfying
\begin{itemize*}
\item
$a_j,b_j\in [l]$ and $a_j\neq b_j$ for every $j\in[\sigma]$,
\item
$|\{j\in[\sigma]\!: a_j=i \}| + |\{j\in[\sigma]\!: b_j=i \}|=s_i$ for every $i\in [l]$.
\end{itemize*}
Note that the existence of such sequences completes the proof of this claim.
Assume to the contrary that $R'=R_1'\cup \ldots \cup R_k'$ is a $k$-Radon partition of $R'$.
Obtain $R_1,\ldots,R_k$ by replacing each of the $\sigma$ new copies of $y$ with a distinct pair $x_{a_j},x_{b_j}$, $j\in [\sigma]$.
By the choice of the $a_j$'s and $b_j$'s, $R=R_1\cup \ldots \cup R_k$ is a partition of $R$. 
Clearly $H_T(R_l)\cap V(T')=H_{T'}(R_l')$ for every $l\in[k]$, hence this partition is a $k$-Radon partition of $R$, contrary to the choice of $R$ as a $k$-a.r.~set.

It thus remains to show the existence of such sequences.
By induction on $k$, we show that if $s=s_1+\ldots+s_l\le 2k-2$ and $s_i\le k-1$ for every $i\in [l]$ we can find two sequences satisfying the above.
We assume $\sigma\ge 1$ or equivalently $s\ge k$, because otherwise there is nothing to prove.
When $k=2$ we thus have that without loss of generality $s_1=s_2=1$, and we set $a_1=1, b_1=2$.
If $k\ge 3$ assume that $s_1\ge s_2\ge \ldots \ge s_l$, and let $a_\sigma =1,b_\sigma =2$.
Now set
\begin{equation*}
s_i'=\left\{
\begin{array}{ll}
s_i-1 & i\in\{1,2\} \\
s_i & \text{otherwise}
\end{array} \right..
\end{equation*}
Note that $s'=s_1'+\ldots s_l'\le 2k-4$ and $s_i'\le k-2$ (otherwise $s_3\ge k-1$ and $s_1+s_2+s_3\ge 3(k-1)>2(k-1)$, a contradiction). Also $\sigma'=2k-4-s'=\sigma-1$. We can now continue by induction.
\end{proof}

Using Claim \ref{pairing} we conclude by induction that
\begin{equation*}
\r_k(T)=|R|\le|R'|+(k-1)\le \r_k(T')+(k-1)\le (k-1)(\r_2(T')+1).
\end{equation*}

The following claim completes the proof of Theorem \ref{partition} in Case 1\ref{case1a_partition}.
\begin{claim}\label{adding_leafs}
$\r_2(T)\ge r_2(T')+1$.
\end{claim}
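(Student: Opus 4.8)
We need to show that if $T'=T\setminus\{x_1,\ldots,x_l\}$, where $x_1,\ldots,x_l$ are leaves all attached to a common vertex $y$ (with $l\ge 2$), then $\r_2(T)\ge\r_2(T')+1$. The plan is to take an $a.r.$ set $S'$ in $T'$ of size $\r_2(T')$ and to build from it an $a.r.$ set $S$ in $T$ that is one vertex larger, by adding back some of the leaves $x_i$ — and, if necessary, removing the vertex $y$ to keep things $a.r.$

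**Construction.** I would split into cases according to whether $y\in S'$.

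\begin{enumerate}[(i)]
\item If $y\notin S'$: set $S = S'\cup\{x_1\}$. Since $x_1$ is a leaf whose only neighbour is $y$, and $y\notin S$, the only way $S$ could fail to be $a.r.$ is through a partition $S=A\cup B$ with a common hull point. Because $x_1$ has a unique neighbour $y\notin S$, any convex set containing $x_1$ but not $y$ is... here one uses the structure of $P_3$-hulls: a leaf contributes essentially nothing to convexity beyond itself, so $H_T(A)\cap V(T') = H_{T'}(A\setminus\{x_1\})$ (and likewise for $B$), which would yield a Radon partition of $S'$. So $S$ is $a.r.$, of size $|S'|+1$.
\item If $y\in S'$: here adding a leaf $x_i$ could create a path $x_i\,y\,x_j$ issue — actually the danger is a Radon partition where $y$ ends up in both hulls via the leaves. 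The fix is to replace $y$ by two leaves: set $S = (S'\setminus\{y\})\cup\{x_1,x_2\}$, again of size $|S'|+1$. Suppose $S=A\cup B$ is a Radon partition. Without loss of generality $x_1\in A$. If $x_2\in A$ as well, then replacing $x_1,x_2$ in $A$ by $y$ gives a partition of $S'$; one checks the common hull point survives (inside $T'$ the leaves added nothing, and $y$ is the unique neighbour of both, so if a hull contained both leaves it would have needed $y$ anyway — more carefully, $H_T(A)\supseteq\{x_1,x_2\}$ forces... ). If instead $x_1\in A$, $x_2\in B$, then $y$ is forced into both $H_T(A)$ and $H_T(B)$ (as the common neighbour of the respective leaves, by the path characterization of convexity), so the common hull point can be taken to be $y$; but then $(A\setminus\{x_1\})\cup\{y\}$ and $(B\setminus\{x_2\})\cup\{y\}$ form a Radon partition of $S'$ with common point $y$ — contradiction.
\end{enumerate}
In both cases $S$ is an $a.r.$ multi-set in $T$ that is in fact a set, of size $\r_2(T')+1$, so $\r_2(T)\ge\r_2(T')+1$.

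**Main obstacle.** The delicate point is the bookkeeping around the vertex $y$ and the leaves: I must verify precisely how a leaf interacts with $P_3$-convex hulls — namely that a leaf $x_i$ in a set $U$ forces nothing into $H_T(U)$ except possibly, via the path characterization, that if two leaves $x_i,x_j$ of $y$ both lie in $U$ then $y\in H_T(U)$. Conversely, having $y\in H_T(U)$ lets one substitute $y$ back for a pair of its leaves without losing the intersection. Getting these two facts stated cleanly — essentially a small lemma that $H_T(U)\cap V(T') = H_{T'}\bigl((U\cap V(T'))\cup\{y \text{ if two leaves of }y\text{ are in }U\}\bigr)$ — is where the real work lies; once it is in hand, the case analysis above is routine. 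A secondary subtlety is making sure the resulting $S$ never has a repeated element, so that the bound is genuinely about $\r_2$ and not just about multi-sets, but since $S'$ is a set (recall $\r_2$ coincides with $\r_2$ for sets) and we only add distinct leaves, this is immediate.
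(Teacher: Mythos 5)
Your construction is exactly the paper's ($S=S'\cup\{x_1\}$ if $y\notin S'$, and $S=(S'\setminus\{y\})\cup\{x_1,x_2\}$ if $y\in S'$), but the hull bookkeeping --- which you rightly identify as where the real work lies --- is wrong at the crucial points. The ``small lemma'' you propose, $H_T(U)\cap V(T') = H_{T'}\bigl((U\cap V(T'))\cup\{y \text{ if two leaves of }y\text{ are in }U\}\bigr)$, is false: a \emph{single} leaf $x_1\in U$ already forces $y$ into $H_T(U)$ whenever $z\in H_T(U)$, because $y$ then has the two neighbours $x_1$ and $z$ in the hull. The same oversight makes the identity $H_T(A)\cap V(T')=H_{T'}(A\setminus\{x_1\})$ in your case (i) incorrect when $z\in H_{T'}(A\setminus\{x_1\})$; the conclusion survives there only because $y$, being a leaf of $T'$ not in $B$, cannot lie in $H_T(B)$ --- an observation you need to make explicitly (the paper does).

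The genuine failure is the subcase $y\in S'$, $x_1\in A$, $x_2\in B$, which is the heart of the claim. Your assertion that $y$ is ``forced into both $H_T(A)$ and $H_T(B)$ as the common neighbour of the respective leaves'' is false: one leaf in a set puts nothing but itself into the hull, so $y\in H_T(A)$ requires $z\in H_T(A)$ as well. Moreover, even granting $y\in H_T(A)\cap H_T(B)$, your proposed contradiction places $y$ in \emph{both} $(A\setminus\{x_1\})\cup\{y\}$ and $(B\setminus\{x_2\})\cup\{y\}$, which is not a partition of $S'$; and once $y$ is assigned to only one part, it does not lie in the hull of the other (in $T'$ the vertex $y$ is a leaf, so it belongs to a hull only if it belongs to the set). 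The paper's argument tracks $z$ instead: since $S'$ is a.r.\ and $y\in S'$, $z$ cannot lie in both $H_{T'}(A\setminus\{x_1\})$ and $H_{T'}(B\setminus\{x_2\})$; assuming without loss of generality $z\notin H_{T'}(B\setminus\{x_2\})$, one sets $A'=(A\setminus\{x_1\})\cup\{y\}$ and checks $H_T(A)\subseteq H_{T'}(A')\cup\{x_1,y\}$ and $H_T(B)=H_{T'}(B\setminus\{x_2\})\cup\{x_2\}$, whose intersection is empty. You need some such argument; as written, this subcase does not go through.
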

\begin{proof}
Let $S'$ be an a.r.~set in $T'$ of maximal size.
Set 
\begin{equation*}
S=\left\{
\begin{array}{ll}
S'\cup\{x_1\} & y\notin S' \\
(S'\setminus\{y\})\cup\{x_1,x_2\} & y\in S'
\end{array} \right..
\end{equation*}
Note that $|S|=|S'|+1$. We shall show that $S$ is a.r, thus proving the claim.
Assume to the contrary that there exists a partition $S=A\cup B$ with $H_T(A)\cap H_T(B)\neq\emptyset$.
We assume $x_1\in A$.

Consider the following three possibilities.
\begin{itemize*}
\item
$y\notin S'$.\\
Set $A'=A\setminus\{x_1\}$.
Note
\begin{itemize*}
\item
$S'=A'\cup B$ is a partition of $S'$.\\
\item
$ 
H_T(A)=\left\{
\begin{array}{ll}
H_{T'}(A')\cup \{x_1\}&z\notin H_{T'}(A')\\
H_{T'}(A')\cup\{x_1,y\}&z\in H_{T'}(A')
\end{array}\right..
$\\
\item
$H_T(B)=H_{T'}(B)$, and $y\notin H_T(B)$.
\end{itemize*}

Therefore
$H_T(A)\cap H_T(B)=H_{T'}(A')\cap H_{T'}(B)=\emptyset$, a contradiction.
\item 
$y\in S'$ and $x_1,x_2\in A$.\\
Let $A'=(A\setminus\{x_1,x_2\})\cup\{y\}$.
Then $H_T(A)=H_{T'}(A')\cup \{x_1,x_2\}$, and $H_T(B)=H_{T'}(B)$.
As before we reach a contradiction.

\item
$y\in S'$ and $x_1\in A$, $x_2\in B$.\\
As $S'$ is a.r.~and $y\in S'$, $z\notin H_{T'}(A\setminus\{x_1\})\cap H_{T'}(B\setminus\{x_2\})$.
Without loss of generality, $z\notin H_{T'}(B\setminus\{x_2\})$.
Set $A'=(A\setminus\{x_1\})\cup \{y\}$.
As before $H_T(A)\subseteq H_{T'}(A')\cup \{x_1,y\}$ and $H_T(B)=H_{T'}(B)\cup \{x_2\}$. This leads to a contradiction to $S'$ being a.r..
\end{itemize*}
\end{proof}

\item\label{case1b_partition}
$s=2k-1$.\\
Define $T'$ as before, and let $R'$ be the union of $R\cap V(T')$ with a copy of $x_1$ and $k-1$ copies of $y$.
\begin{claim}
$R'$ is $k$-a.r..
\end{claim}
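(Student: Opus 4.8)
The plan is to run the pairing argument of Claim~\ref{pairing} in reverse: from a hypothetical $k$-Radon partition of $R'$ I would blow the $k-1$ fresh copies of $y$ back up into pairs of leaves among $x_1,\ldots,x_l$ and so recover a $k$-Radon partition of $R$, contradicting the choice of $R$. Two reductions make the accounting exact. First, since $s=s_1+\ldots+s_l=2k-1$ while each $s_i\le k-1$, any two of the $s_i$ sum to at most $2k-2<s$, so $l\ge 3$. Secondly, I claim $y\notin R$, i.e.~$t=0$: the multiset of all copies of $x_1,\ldots,x_l$ in $R$ has size $2k-1$ and maximum multiplicity $\max_i s_i\le k-1=\lfloor(2k-1)/2\rfloor$, so a greedy pairing splits it into $k-1$ pairs of distinct $x_i$'s together with one leftover copy; if $t\ge 1$ we take $k-t\le k-1$ of these pairs together with the $t$ copies of $y$, forming $k$ parts, and throw the leftover $x_i$-copies and all remaining vertices of $R$ into the first part. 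Each part then has $y$ in its $T$-hull --- it either contains a copy of $y$, or is a pair $x_a,x_b$ of distinct neighbours of $y$, making $y$ the middle vertex of the path $x_a\,y\,x_b$ --- so this is a $k$-Radon partition of $R$ with common vertex $y$, which is impossible. Hence $t=0$, and now $R$ consists of $R\cap V(T')$ together with $s_1,\ldots,s_l$ copies of $x_1,\ldots,x_l$, while $R'$, naturally a multiset in the tree $\widehat{T}:=T\setminus\{x_2,\ldots,x_l\}$ (which has fewer than $n$ vertices since $l\ge 2$), is $R\cap V(T')$ with one copy of $x_1$ and $k-1$ copies of $y$ adjoined; in particular $|R|=|R'|+(k-1)$.

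The combinatorial heart is the elementary fact that any finite multiset of even size $2m$ in which no element occurs more than $m$ times can be partitioned into $m$ pairs of distinct elements. (Induction on $m$: pair off one copy of each of the two currently most frequent distinct values --- there are at least two such, since the maximum multiplicity is less than the size --- and check that the resulting multiset again has maximum multiplicity at most half its size.) I apply this to the multiset $M$ consisting of $s_1-1$ copies of $x_1$ and, for $2\le i\le l$, of $s_i$ copies of $x_i$: here $|M|=s-1=2k-2$, every multiplicity is at most $k-1=(2k-2)/2$ (using $s_1-1\le k-2$ and $s_i\le k-1$), and $M$ uses at least two distinct values (by $l\ge 3$), so $M$ decomposes into $k-1$ pairs $\{x_{a_j},x_{b_j}\}$, $j\in[k-1]$, with $a_j\ne b_j$.

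Now suppose, for contradiction, that $R'=R_1'\cup\ldots\cup R_k'$ is a $k$-Radon partition of $R'$ in $\widehat{T}$, with some vertex $p$ lying in every $H_{\widehat{T}}(R_j')$. Enumerate the $k-1$ copies of $y$ occurring in $R'$ and, leaving everything else in place, replace the $j$th of these (inside whichever part contains it) by the pair $x_{a_j},x_{b_j}$; call the new parts $R_1,\ldots,R_k$. By the choice of $M$, and because the single retained copy of $x_1$ supplies the one missing occurrence, $R=R_1\cup\ldots\cup R_k$ as multisets. Since $x_2,\ldots,x_l$ are leaves of $T$, they never enter the $P_3$-hull of a set avoiding them, so $H_{\widehat{T}}(S)=H_T(S)$ for every multiset $S\subseteq V(\widehat{T})$; in particular $p\in H_T(R_j')$ for all $j$. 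Finally, $y\in H_T(\{x_{a_j},x_{b_j}\})$, so replacing a copy of $y$ by this pair cannot shrink the hull, whence $H_T(R_j')\subseteq H_T(R_j)$ and $p\in\bigcap_j H_T(R_j)$. Thus $R$ admits a $k$-Radon partition, contradicting that $R$ is $k$-a.r., which proves the claim.

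I expect the only genuine difficulty to be keeping the multiplicity arithmetic aligned on the nose: the case hypothesis $s=2k-1$ together with the bound $s_i\le k-1$ (which is where $R$ being $k$-a.r.~is used) is exactly what makes $l\ge 3$, lets the pairing fact apply both to the size-$(2k-1)$ multiset in the reduction to $t=0$ and to the size-$(2k-2)$ multiset $M$, and forces precisely one surplus copy --- taken to be $x_1$ --- to remain. The two convexity facts invoked, namely that deleting leaves does not change hulls of leaf-avoiding sets and that a vertex lies in the hull of any two of its neighbours, are immediate from the definition of $P_3$-convexity.
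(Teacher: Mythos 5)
Your proof is correct and takes essentially the same route as the paper: the paper disposes of this claim by replacing $s_1$ with $s_1-1$ and invoking the pairing construction of Claim~\ref{pairing}, which is precisely your decomposition of the size-$(2k-2)$ multiset $M$ into $k-1$ pairs of distinct leaves followed by the hull-monotone replacement argument transferring a $k$-Radon partition of $R'$ back to one of $R$. Your explicit preliminary derivation that $t=0$ is a worthwhile addition the paper leaves implicit --- it is in fact needed, since otherwise $R'$ would contain at least $k$ copies of $y$ and could not be $k$-a.r.
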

\begin{proof}
Replacing $s_1$ by $s_1-1$ returns us to the setting of Claim \ref{pairing}. Following the same arguments we obtain this claim.
\end{proof}
Set $T''=T'\setminus \{y\}$, $R''=R'\cap V(T'')$ (see Figure \ref{fig_radon_1}). Then $z\notin H_{T''}(R'')$ as otherwise we can partition $R'$ into 
$k$ parts,  $k-1$ of which contain $y$, and the last contains both $x_1$ and $z$. This partition is such that $y$ is in the convex hull of all parts, contradicting the fact that $R'$ is $k$-a.r..
Thus
\begin{equation*}
\r_k(T)=|R|=2k-1+|R''|< 3(k-1)+\r_k^*(T'',z)\le (k-1)(3+\r_2^*(T'',z)).
\end{equation*} 
The following claim completes the proof of Theorem \ref{partition} in Case 1\ref{case1b_partition}.
\begin{claim}\label{claim_radon}
$\r_2(T)\ge 3+\r_2^*(T'',z)$.
\end{claim}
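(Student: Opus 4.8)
The plan is to produce an anti‑Radon set of size $3+\r_2^*(T'',z)$ in $T$.  First note that here $l\ge 3$: recall $1\le s_i\le k-1$ for each $i$, so $l\le 2$ would give $s\le 2k-2$, contradicting $s=2k-1$.  Fix an a.r.\ set $S''$ in $T''$ with $z\notin H_{T''}(S'')$ and $|S''|=\r_2^*(T'',z)$, and put $S=S''\cup\{x_1,x_2,x_3\}$, a set of size $3+\r_2^*(T'',z)$ since $x_1,x_2,x_3\notin V(T'')$.  It suffices to prove that $S$ is a.r.

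Suppose not: there is a partition $S=A\cup B$ with $w\in H_T(A)\cap H_T(B)$ for some $w$.  Put $A''=A\cap V(T'')$ and $B''=B\cap V(T'')$, so $A''\cup B''=S''$ is a partition and $H_{T''}(A'')\cap H_{T''}(B'')=\emptyset$.  By pigeonhole one part, say $A$, contains two of the leaves $x_i$; both are neighbours of $y$, so $y\in H_T(A)$.  Then $B$ has at most one leaf, and since a leaf, having a single neighbour, lies in no convex hull that does not contain it, and since $z\notin H_{T''}(S'')\supseteq H_{T''}(B'')$ prevents $z$ from entering $H_T(B)$, we get $H_T(B)\cap V(T'')=H_{T''}(B'')$ while $H_T(B)$ contains no vertex of $\{x_1,\dots,x_l,y\}$ other than (possibly) the leaf of $B$; as that leaf is not in $A$ and $y\notin H_T(B)$, this forces $w\in V(T'')$.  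On the other side, from $y\in H_T(A)$ and the fact that $z$ is the only neighbour of $y$ in $V(T'')$, we get $H_T(A)\cap V(T'')=H_{T''}(A'')$ if $z$ has no neighbour in $H_{T''}(A'')$, and $H_T(A)\cap V(T'')=H_{T''}(A''\cup\{z\})$ otherwise.  In the first case $w\in H_{T''}(A'')\cap H_{T''}(B'')=\emptyset$, a contradiction; so $z$ has a neighbour $u\in H_{T''}(A'')$, which — as $z\notin H_{T''}(S'')$ — is the \emph{only} neighbour of $z$ in $H_{T''}(S'')$, and $z\in H_T(A)$.

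I would finish by restricting everything to the component $T^+$ of $z$ in $T''\bs\{u\}$.  All neighbours of $z$ other than $u$ lie in $T^+$ and avoid $H_{T''}(S'')$, so for $Y\subseteq V(T^+)$ one has $H_{T''}(Y)\cap V(T^+)=H_{T^+}(Y)$ and $H_{T''}(Y\cup\{z\})\cap V(T^+)=H_{T^+}(Y\cup\{z\})$ (through $u$ only $z$ could be forced into $T^+$, and $z$ is either excluded from or already in the relevant hull).  With $X=S''\cap V(T^+)$, $P=A''\cap V(T^+)$ and $P'=B''\cap V(T^+)$, the data above becomes: $X=P\cup P'$ is a.r.\ in $T^+$; $z\notin H_{T^+}(X)$ and $z$ has no neighbour in $H_{T^+}(X)$; and $w\in H_{T^+}(P\cup\{z\})\cap H_{T^+}(P')$, the first membership holding because the set $H_{T''}(A''\cup\{z\})\bs H_{T''}(A'')$ is connected, contains $z$, and avoids $u$, hence lies in $V(T^+)$.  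This contradicts the following statement, which I expect to be the heart of the matter: \emph{if $X$ is an a.r.\ set in a tree and $z$ is a vertex outside $H(X)$ with no neighbour in $H(X)$, then $X\cup\{z\}$ is a.r.}

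I would prove this last statement by induction on the order of the tree.  A leaf $\ell\notin X\cup\{z\}$ may be deleted — a leaf lies in no hull of a set not containing it, so every relevant hull is unchanged and the inductive hypothesis applies — so one may assume every leaf lies in $X\cup\{z\}$.  Given a Radon partition $X\cup\{z\}=(P\cup\{z\})\cup P'$, one has $H(P)\cap H(P')=\emptyset$ because $X$ is a.r.  The set $Q:=H(P\cup\{z\})\bs H(P)$ is a subtree containing $z$ (each vertex it gains in the closure is adjacent to an earlier one) and it must meet $H(P')$; walking in $Q$ from $z$ to a vertex of $H(P')$, the last vertex $p$ outside $H(X)$ has a neighbour $p'\in H(X)$, so $p\ne z$ (as $z$ has none), and — by convexity of $H(X)$, together with the fact that $p$ entered the closure of $P\cup\{z\}$ — $p$ has two neighbours inside $Q\bs H(X)$.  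Following this forced branching away from $H(P)$ to its conclusion must produce a vertex lying in $H(P)\cap H(P')$ or a violation of the a.r.\ property of $X$.  Making this propagation precise — running it, say, on a counterexample of minimum order, so that no vertex is redundant — is where I expect the real work to lie; everything preceding it is routine hull bookkeeping.
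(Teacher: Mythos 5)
Your overall strategy is the paper's: take $S=S''\cup\{x_1,x_2,x_3\}$ (noting $l\ge 3$) and reduce the a.r.\ property of $S$ to a statement about extending an a.r.\ set by one vertex lying outside its hull. The hull bookkeeping in your second and third paragraphs is essentially correct, including the localisation of the common point $w$ to $H_{T''}(A''\cup\{z\})\bs H_{T''}(A'')$. But everything ultimately rests on the italicised statement --- \emph{if $X$ is a.r.\ and $z\notin H(X)$ has no neighbour in $H(X)$, then $X\cup\{z\}$ is a.r.} --- and you do not prove it. Your ``forced branching'' sketch never specifies what is being propagated, why it cannot stall, or what contradiction it terminates in, and you say yourself that this is where the real work lies. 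That is a genuine gap: the claim you were asked to prove has been traded for an unproven lemma of essentially the same depth.

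The lemma is true, and the paper shows how to handle it. The paper proves the special case in which $z$ is a \emph{new leaf} attached at a vertex $v\notin H_T(S)$, by induction on $|V(T)|$: decompose $T$ at $v$ into branches $T_i$, observe that at most one branch has $v_i\in H_{T_i}(A_i)$, and push the attached leaf into the remaining branches inductively. Your more general statement reduces to that special case by decomposing the tree at $z$ itself: if $T_1,\ldots,T_d$ are the components of $T\bs\{z\}$ and $w_i$ is the neighbour of $z$ in $T_i$, then for a partition $(A\cup\{z\})\cup B$ one has $H(A\cup\{z\})\cap V(T_i)=H_{(T_i)_{w_i\leftarrow z}}(A_i\cup\{z\})\bs\{z\}$ and $H(B)\cap V(T_i)=H_{T_i}(B_i)$, while each $X_i=X\cap V(T_i)$ is a.r.\ in $T_i$ with $w_i\notin H_{T_i}(X_i)$, so the leaf version applies branch by branch. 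I would also point out that your detour through the component $T^+$ of $z$ in $T''\bs\{u\}$ is unnecessary: since two leaves of $A$ force $y$ into $H_T(A)$ and a single leaf never enters a hull it does not belong to, one gets $H_T(A)\cap H_T(B)=H_{T'}(A''\cup\{y\})\cap H_{T'}(B'')$ directly, where $T'=T''_{z\leftarrow y}$, and the leaf-attachment lemma applied to $S''$, $z$ and the new leaf $y$ finishes the proof in one step.
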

\begin{proof}
Let $S''$ be an a.r.~set of $T''$ satisfying $z\notin H_{T''}(S'')$.
We shall show that $S=S''\cup \{x_1,x_2,x_3\}$ is a.r. thus proving the claim (note that $l\ge 3$, so $S$ is well defined).
Assume that we have a Radon partition $S=A\cup B$.
Without loss of generality $x_1,x_2\in A$.
Set $A'=(A\cap V(T'))\cup \{y\}$, $B'=B\cap V(T')$. 
Then
$H_{T}(A)\cap H_T(B)=H_{T'}(A')\cap H_{T'}(B')$.
Claim \ref{claim_radon} follows from the following claim.
\begin{claim}
Let $T$ be a tree, $v\in V(T)$ and $S$ an a.r.~set in $T$ satisfying $v\notin H_T(S)$.
Let $T_{v\leftarrow u}$ denote the tree obtained from $T$ by adding a new vertex $u$ and connecting it to $v$.
Then $S\cup \{u\}$ is a.r.~in $T_{v\leftarrow u}$.
\end{claim}
\begin{proof}
We prove the claim by induction on $|V(T)|$.
The claim is clear when $T$ has at most one vertex.
Let $T'=T_{v\leftarrow u}$ and $S=A\cup B$ a partition of $S$.
We show $H_{T'}(A\cup\{u\})\cap H_{T'}(B)=\emptyset$.

Let $v_1,\ldots,v_l$ be the neighbours of $v$ in $T$. 
Denote by $T_i$ the component of $v_i$ in $T\setminus \{v\}$ and $S_i=S\cap V(T_i)$, $A_i=A\cap V(T_i)$, $B_i=B\cap V(T_i)$.
If for every $i\in[l]$, $v_i\notin H_{T_i}(A_i)$, then $H_{T'}(A\cup \{u\})=H_{T'}(A)\cup \{u\}$ and $H_{T'}(B)=H_{T}(B)$.
Thus, as $S$ is a.r., $H_{T'}(A\cup\{u\})\cap H_{T'}(B)=H_T(A)\cap H_T(B)=\emptyset$.

So we can assume $v_1\in H_{T_1}(A_1)$.
As $v\notin H_T(S)$, this means that $v_i\notin H_{T_i}(S_i)$ for every $i\ge 2$.
As $S$ is a.r., $v_1\notin H_{T_1}(B_1)$.
Thus
\begin{align*}
&H_{T'}(A\cup\{u\})=H_{T_1}(A_1)\cup \{u\}\cup (\bigcup_{j\ge 2}H_{(T_i)_{v_i\leftarrow v}}(A_i\cup \{v\})) \\
&H_{T'}(B)=\bigcup_{i\ge 1}H_{T_i}(B_i)=H_{T_1}(B_1)\cup(\bigcup_{i\ge 2}H_{(T_i)_{v_i\leftarrow v}}B_i).
\end{align*}
Therefore
\begin{align*}
&H_{T'}(A\cup \{u\})\cap H_{T'}(B)=\bigcup_{i\ge 2}( H_{(T_i)_{v_i\leftarrow v}}(A_i\cup \{v\})\cap H_{(T_i)_{v_i\leftarrow v}}(B_i)   )
\end{align*}
The proof follows using the induction hypothesis with $T_i$, $i\ge 2$.
\end{proof}
\end{proof}
\item
$s\ge 2k$.\\
Similarly to Claim \ref{pairing}, we can conclude that the multi-set obtained by adding $k$ copies of $y$ to $R\cap V(T')$ is $k$-a.r.~, which is obviously a contradiction.

\end{enumerate}

\subsection{Case 2.}

In every longest path $v_1,\ldots,v_m$ of $T$, $\deg(v_2)=2$.
\setlength{\parindent}{0pt} 
\\

Fix a longest path $v_1,\ldots,v_m$ in $T$.
Denote $v_3=z$, and note that each of its neighbours other than $v_4$ is either an endvertex or has degree $2$ and is adjacent to an endvertex (by the choice of the longest path and the definition of Case 2). Let $y_1,\ldots,y_p$ be the neighbours of $z$ other than $v_4$ which have degree $2$, and $y_{p+1},\ldots,y_q$ the neighbours of $z$ other than $v_4$ which are endvertices.
Let $x_1,\ldots,x_p$ be the neighbours of $y_1,\ldots,y_p$ which are endvertices respectively (see Figure \ref{fig_radon2}).

\begin{figure}[ht]\centering
\subfloat[Case 2]{
\label{fig_radon2}
\begin{tikzpicture}
\tikzstyle{bel}=[below=0.05]
\draw[fill](0,0) circle [radius=0.125];
\draw[fill](-1,1.5) circle [radius=0.125];
\draw[fill](-1,.5) circle [radius=0.125];
\draw[fill](-1,-.2) circle [radius=0.125];
\draw[fill](-1,-1.2) circle [radius=0.125];
\draw[fill](-2,1.5) circle [radius=0.125];
\draw[fill](-2,.5) circle [radius=0.125];
\draw[fill](1,0) circle [radius=0.125];

\draw (0,0)--(1,0)--(1.8,.4);
\draw (1,0)--(1.8,-.7);
\draw (-2,1.5)--(-1,1.5)--(0,0);
\draw (-2,.5)--(-1,.5)--(0,0);
\draw (-1,-.2)--(0,0)--(-1,-1.2);

\draw[fill](-1,1.2) circle [radius=0.025];
\draw[fill](-1,1.0) circle [radius=0.025];
\draw[fill](-1,.8) circle [radius=0.025];

\draw[fill](-1,-.5) circle [radius=0.025];
\draw[fill](-1,-.7) circle [radius=0.025];
\draw[fill](-1,-.9) circle [radius=0.025];

\node[bel] at (1,0) {$v_4$};
\node[bel] at (0,0) {$z$};
\node[above left, yshift=-3, xshift=1] at (-1,1.5) {$y_1$};
\node[above left, yshift=-3, xshift=1] at (-1,0.5) {$y_p$};
\node[left] at (-1,-0.2) {$y_{p+1}$};
\node[left] at (-1,-1.2) {$y_q$};
\node[left] at (-2,0.5) {$x_p$};
\node[left] at (-2,1.5) {$x_1$};

\boldmath
\node at (0.1,0.8){$T'$};
\node at (1.1,0.6){$T''$};
\draw [rounded corners=25pt](2,-1.2)--(-.5,-1.2)--(-.5,1.2)--(2,1.2);
\draw [rounded corners=20pt](2,-.9)--(.5,-.9)--(.5,.9)--(2,.9);
\end{tikzpicture}
}
\hspace{.5in}
\subfloat[Case 2\ref{case2a_partition}]{
\label{fig_radon2a}
\begin{tikzpicture}
\tikzstyle{bel}=[below=0.05]
\draw[fill](0,0) circle [radius=0.125];
\draw[fill](1,0) circle [radius=0.125];
\draw[fill](2,0) circle [radius=0.125];
\draw[fill](3,0) circle [radius=0.125];
\draw (3.8,-.7)--(3,0)--(3.8,.4);
\draw (0,0)--(3,0);

\draw [rounded corners=25pt](4,-1)--(2.5,-1)--(2.5,1)--(4,1);
\node[bel]at(0,0){$x_1$};
\node[bel]at(1,0){$y_1$};
\node[bel]at(2,0){$z$};
\node[bel]at(3,0){$v_4$};
\boldmath
\node at (3.1,0.65){$T'$};

\end{tikzpicture}
}
\caption{Case 2}

\end{figure}

Denote by $s_i$, $i\in [p]$, the number of appearances of $x_i$ in $R$; $t_i$, $i\in [q]$, the number of appearances of $y_i$ in $R$ and $u$ the number of appearances of $z$ in $R$.
Let $t=t_1+\ldots+t_q$. As in the previous case, we conclude from the fact that $R$ is $k$-a.r.~that $t\le 2k-1$.
Consider the following three cases.
\begin{enumerate}[(a)]
\item\label{case2a_partition}
$q=1$.\\
Then $t_1+\min\{s_1,u\}\le k-1$ (otherwise obtain a $k$-Radon partition of $R$ by putting a copy of $y_1$ in $t_1$ sets, and a copy of $x_1$ and $z$ in the other $k-t_1$ sets).
Thus 
\begin{equation*}
s_1+t_1+u=t_1+\min\{s_1,u\}+\max\{s_1,u\}\le 2(k-1).
\end{equation*}
Let $T'=T\setminus\{x_1,y_1,z\}$, $R'=R\cap V(T')$ (see Figure \ref{fig_radon2a}). Then
\begin{equation*}
\r_k(T)=|R|\le|R'|+2(k-1)\le\r_k(T')+2(k-1)\le  (k-1)(\r_2(T')+2).
\end{equation*}
The proof of Theorem \ref{partition} in Case 2\ref{case2a_partition} follows from the following claim.
\begin{claim}
$\r_2(T)\ge \r_2(T')+2$.
\end{claim}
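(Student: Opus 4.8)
The plan is to build an a.r.\ set in $T$ of size $\r_2(T')+2$. Let $S'$ be an a.r.\ set in $T'$ of maximum size, so $|S'|=\r_2(T')$, and note $x_1,y_1\notin S'$ because $S'\subseteq V(T')=V(T)\setminus\{x_1,y_1,z\}$. I claim that $S:=S'\cup\{x_1,y_1\}$ is a.r.\ in $T$; since $|S|=|S'|+2$ this immediately gives $\r_2(T)\ge|S|=\r_2(T')+2$, as desired. The argument is parallel in spirit to Claim~\ref{adding_leafs}, the difference being that here we remove (and replace by two vertices) the pendant path $x_1y_1z$ of length three rather than a collection of leaves.

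To prove $S$ is a.r., suppose for contradiction that $S=A\cup B$ is a Radon partition, and set $A'=A\cap V(T')$, $B'=B\cap V(T')$; then $(A',B')$ partitions $S'$, so $H_{T'}(A')\cap H_{T'}(B')=\emptyset$. The structure to exploit is that $\{x_1,y_1,z\}$ is joined to the rest of $T$ only by the edge $zv_4$, that $x_1$ is a leaf adjacent to $y_1$, and that $y_1$ and $z$ have degree $2$, with neighbour sets $\{x_1,z\}$ and $\{y_1,v_4\}$ respectively. I would establish two facts, for each part $C\in\{A,B\}$ (note $C\subseteq V(T')\cup\{x_1,y_1\}$):
\begin{enumerate}[(i)]
\item $H_T(C)\cap V(T')=H_{T'}(C\cap V(T'))$;
\item none of $x_1,y_1,z$ lies in $H_T(A)\cap H_T(B)$.
\end{enumerate}
Granting these, and using that $V(T)$ is the disjoint union of $V(T')$ and $\{x_1,y_1,z\}$,
\[
H_T(A)\cap H_T(B)=\bigl(H_{T'}(A')\cap H_{T'}(B')\bigr)\cup\bigl(H_T(A)\cap H_T(B)\cap\{x_1,y_1,z\}\bigr)=\emptyset ,
\]
contradicting that $A\cup B$ is a Radon partition. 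Hence $S$ is a.r.\ and the claim follows.

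Both facts come from tracking the convex-hull closure process. For (ii): $x_1$ is a leaf, so $x_1\in H_T(C)$ forces $x_1\in C$, and $A,B$ are disjoint; if $y_1\in H_T(C)\setminus C$, then at the step $y_1$ is added both its neighbours $x_1,z$ are already present, but $z\notin S$, so $z$ must have been added earlier using its neighbour $y_1$ — absurd — hence $y_1\in H_T(C)\Rightarrow y_1\in C$; and $z\in H_T(C)\Rightarrow y_1\in H_T(C)\Rightarrow y_1\in C$, so $z$ cannot lie in both hulls either. Fact (i) is trivial in the direction $H_{T'}(C\cap V(T'))\subseteq H_T(C)$, and for the reverse, since $zv_4$ is the only edge linking the two sides, it suffices to show $v_4\in H_T(C)\Rightarrow v_4\in H_{T'}(C\cap V(T'))$. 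This is the delicate point, because $z$ and $v_4$ could a priori reinforce one another into the hull; I would handle it by inspecting the \emph{first} moment $v_4$ enters the closure: at that moment $z$ is not yet present (adding $z$ requires $v_4$), so $v_4$ is added using two of its neighbours in $V(T')$, and every vertex of $V(T')$ added beforehand was added by $T'$-closure rules, whence $v_4\in H_{T'}(C\cap V(T'))$. I expect this $z$–$v_4$ interaction to be the only genuine obstacle; the rest is routine.
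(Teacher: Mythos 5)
Your construction is exactly the one the paper uses: take a maximum a.r.\ set $S'$ of $T'$ and adjoin $x_1,y_1$ (the paper merely asserts that verifying $S'\cup\{x_1,y_1\}$ is a.r.\ is easy, whereas you supply the verification, correctly handling the one delicate point that $v_4$ must enter the closure before $z$ can). The proposal is correct and takes essentially the same approach as the paper.
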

\begin{proof}
Let $S'$ be an a.r.~set in $T'$. Set $S=S\cup \{x_1,y_1\}$.
It is easy to verify that $S$ is a.r..
\end{proof}

\item
$t \le 2k-2$.\\
As before, set $\tau=\min\{t,2k-2-t\}$, $\sigma=(t-\tau)/2$ (then $t-\sigma\le k-1$).
Let $T'=T\setminus\{x_1,\ldots,x_p,y_1,\ldots,y_q\}$ (see Figure \ref{fig_radon2}) and let $R'$ be the multi-set obtained by adding $\sigma$ copies of $z$ to $R'\cap V(T')$.
Then as in Claim \ref{pairing}, $R'$ is $k$-a.r. and thus
\begin{equation*}
\r_k(T)=|R|=|R'|+s+t-\sigma\le \r_k(T')+(p+1)(k-1)\le (k-1)(\r_2(T')+p+1).
\end{equation*} 
The proof of Theorem \ref{partition} in this case follows from the following claim.
\begin{claim}
$\r_2(T)\ge\r_2(T')+p+1$.
\end{claim}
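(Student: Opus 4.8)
The plan is to build a large anti-Radon set of $T$ out of a maximal anti-Radon set $S'$ of $T'$, in the spirit of Claim \ref{adding_leafs} but adapted to the two-layer ``broom'' hanging below $z$. Two facts will be used repeatedly. First, in $T'$ the vertex $z$ is a leaf (its unique $T'$-neighbour is $v_4$), so $z\in H_{T'}(U)\iff z\in U$. Second, the vertices $x_1,\dots,x_p,y_1,\dots,y_q$ meet $V(T')$ only through $z$, so for every $U\subseteq V(T)$
\[
H_T(U)\cap V(T')=H_{T'}\bigl((U\cap V(T'))\cup\{\,z : z\in H_T(U)\,\}\bigr),
\]
which follows from the standard minimal-counterexample argument on the hull process, using that each vertex of $V(T')\setminus\{z\}$ has all of its neighbours in $V(T')$.

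Main case: some maximal anti-Radon set $S'$ of $T'$ avoids $z$. Set $S=S'\cup\{x_1,\dots,x_p\}\cup\{w\}$, where $w$ is a neighbour of $z$ among $y_1,\dots,y_q$ (take $w=y_{p+1}$ if $q>p$, and $w=y_1$ otherwise); then $|S|=|S'|+p+1$, so it suffices to prove $S$ is anti-Radon. Suppose $S=A\cup B$ with $c\in H_T(A)\cap H_T(B)$, and write $A''=A\cap V(T')$, $B''=B\cap V(T')$, so $A''\cup B''=S'$. One first checks $c$ cannot be any $x_i$ (a leaf, lying in exactly one side) nor any $y_j$ (a leaf $y_j$ lies in one side; a degree-$2$ $y_j\notin S$ would force $x_j$ into both sides), so $c\in V(T')$. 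Since exactly one of the $y$'s, namely $w$, lies in $S$, at the first moment $z$ would enter $H_T(A)$ its available neighbours are $v_4$ together with the $y$'s lying directly in $A$; hence $z\in H_T(A)$ forces $w\in A$, and likewise $z\in H_T(B)$ forces $w\in B$, so $z$ cannot lie in both hulls. Therefore $c\neq z$, and the displayed identity gives $c\in H_{T'}(A'')\cap H_{T'}(B'')$, contradicting that $S'$ is anti-Radon.

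Remaining case: every maximal anti-Radon set of $T'$ contains $z$. Take such a set $S'$, put $S''=S'\setminus\{z\}$ (anti-Radon in $T'$, avoiding $z$, of size $\r_2(T')-1$), and set $S=S''\cup\{x_1,\dots,x_p,y_1,y_2\}$ (using $q\ge 2$, which holds as $q=1$ is Case 2\ref{case2a_partition}); then $|S|=|S''|+p+2=\r_2(T')+p+1$. Again, given a hypothetical Radon partition $A\cup B$ with common point $c$: if $c\in V(T')\setminus\{z\}$ one gets $c\in H_{T'}(A'')\cap H_{T'}(B'')$, contradicting $S''$ anti-Radon; if $c=z$ then $y_1,y_2$ must be split between $A$ and $B$, whence $v_4\in H_{T'}(A'')\cap H_{T'}(B'')$, again a contradiction; and if $c=y_i$ one is reduced to studying how a hull can propagate through $z$ into $T'$, which is controlled using the Case $2$ restriction that every neighbour of $z$ other than $v_4$ is a leaf or a degree-$2$ vertex adjacent to a leaf (forced by the choice of the longest path).

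The hard part is exactly this last sub-case $c=y_i$ in the remaining case: when both $x_i$ and $y_i$ lie in $S$, a partition isolating $y_i$ on one side can recapture it on the other side as soon as that side's hull reaches $z$ (via $v_4$ and the other $y$-vertex), so one must show that in Case $2$ the neighbourhood of $v_4$ — and hence $T'$ — is tame enough that this does not happen, or, alternatively, that the remaining case with $q$ small cannot actually occur in Case $2$. This is the only place where more than a routine hull computation is required, and it is where I would concentrate the effort.
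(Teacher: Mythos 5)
You have reproduced exactly the paper's construction: take a maximum a.r.\ set $S'$ of $T'$, and either add $x_1,\dots,x_p$ and one $y$ (if $S'$ can be chosen to avoid $z$) or replace $z$ by $y_1,y_2$ and add $x_1,\dots,x_p$ (otherwise). The paper's own justification is the single sentence that the verification is ``similar to the proof of Claim \ref{adding_leafs}'', so on the construction itself you and the paper agree, and your first case is correctly and completely verified. The problem is the sub-case you explicitly leave open ($c=y_i$ in the remaining case): that is not a routine hull computation you have deferred, it is the whole content of the claim, so as written the proposal is not a proof.

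Moreover, the difficulty you flag is a genuine obstruction, not a technicality. Since $z\in S'$ and $S'$ is a.r., one only gets that $v_4$ cannot lie in \emph{both} of $H_{T'}(A'')$ and $H_{T'}(B'')$; nothing prevents $v_4\in H_{T'}(B'')$ for some partition $A''\cup B''$ of $S''$. In that situation the partition $A=A''\cup\{y_1\}$, $B=B''\cup\{x_1,\dots,x_p,y_2\}$ is a Radon partition of your $S$: the hull of $B$ reaches $v_4$, then $z$ (via $v_4$ and $y_2$), then $y_1$ (via $z$ and $x_1$), while $y_1\in A$. This really happens: take $T'$ to be the path $z,v_4,v_5,v_6,v_7$ with two further leaves $c_1,c_2$ at $v_4$, and $p=1$, $q=2$ (so $T$ is $T'$ with $y_1,y_2$ attached to $z$ and $x_1$ attached to $y_1$). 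Every maximum a.r.\ set of $T'$ has size $5$ and contains $z$ (e.g.\ $\{z,c_1,c_2,v_6,v_7\}$), and with $B''=\{c_1,c_2\}$ the constructed $S$ admits the Radon partition above. Worse, a short case check shows $\r_2(T)=6<7=\r_2(T')+p+1$ for this $T$, so no alternative construction can rescue the inequality for this choice of longest path; the statement only becomes true after re-selecting the longest path (reading the path from $v_7$, this tree lands in Case 2\ref{case2a_partition}, where the bound $\r_2(T)\ge\r_2(T')+2$ does hold). So your instinct to concentrate the effort there is right, but the resolution has to come from constraining which longest path is fixed, not from a finer hull analysis of the given construction.
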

\begin{proof}
Let $S'$ be a.r.~in $T'$.
Set
\begin{align*}
S=\left\{
\begin{array}{ll}
S'\cup\{x_1,\ldots,x_p,y_1\}&z\notin S'\\
(S'\setminus \{z\})\cup\{x_1,\ldots,x_p,y_1,y_2\}&z\in S'
\end{array}
\right.
\end{align*}
Showing that $S$ is a.r.~is similar to the proof of Claim \ref{adding_leafs}.
\end{proof}

\item
$t=2k-1$.\\
Let $T'=T\setminus\{x_1,\ldots,x_p,y_1,\ldots,y_q\}$, $T''=T'\setminus\{z\}$ (see Figure \ref{fig_radon2}).
Let $R''=R\cap V(T'')$ and let $R'$ be the multi-set obtained by adding $k-1$ copies of $z$ to $R''$ and a copy of $y_1$. As in Case 1\ref{case1b_partition}, $R'$ is $k$-a.r.~in $T'$ and $R''$ is $k$-a.r.~in $T''$ with $v_4\notin H_{T''}(R'')$.
Thus
\begin{align}\label{last_equation}
&\r_k(T)=|R''|+s+2k-1\le 
\r_k^*(T'',v_4)+s+2k-1\le (k-1)\r_2^*(T'',v_4)+s+2k-1.
\end{align}
As $s_i\le k-1$ for every $i\in[p]$, $s\le (k-1)p$.
If $s<(k-1)p$, we obtain
\begin{equation*}
\r_k(T)\le(k-1)(\r_2^*(T'',v_4)+p+2).
\end{equation*}
And the proof of  Theorem \ref{partition} in this case follows from the claim below.
\begin{claim}
$\r_2(T)\ge \r_2(T'',v_4)+p+2$.
\end{claim}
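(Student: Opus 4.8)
The plan is to mimic the proof of Claim~\ref{claim_radon}. Let $S''$ be an a.r.~set of $T''$ with $v_4\notin H_{T''}(S'')$ and $|S''|=\r_2^\ast(T'',v_4)$; I will exhibit an a.r.~set of $T$ of size $|S''|+p+2$, which gives the claim. Since $R$ is $k$-a.r.\ we have $t_1\le k-1<2k-1=t$, so $q\ge 2$, and $v_2$ shows $p\ge 1$; hence I may take
\[
S=S''\cup\{x_1,\ldots,x_p,y_1,y_2\},
\]
a set of size $|S''|+p+2$ (the broom vertices all lie outside $T''$, so the union is disjoint).

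The first, and key, step is a structural observation about the $P_3$-hull on the broom at $z$. For any $X\subseteq S$ I claim that $z\in H_T(X)$ if and only if $\{y_1,y_2\}\subseteq X$: as long as $z$ has not entered the hull, the closure process respects the components of $T-z$, so inside $T''$ it stays within $H_{T''}(X\cap V(T''))\subseteq H_{T''}(S'')$, which omits $v_4$, and inside the component of each $y_i$ it cannot generate $y_i$ (the neighbour $z$, or the leaf $x_i$, is missing); thus $z$ is forced into $H_T(X)$ exactly when at least two of its neighbours $y_i$ already lie in $X$, and the only candidates in $S$ are $y_1,y_2$. The same component analysis yields: if $z\notin H_T(X)$ then $v_4\notin H_T(X)$ and $H_T(X)\cap V(T'')=H_{T''}(X\cap V(T''))$, while no leaf $x_i$ and no $y_i$ lies in $H_T(X)$ unless it already lies in $X$ (or, for $y_i$, unless $z\in H_T(X)$).

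Now let $S=A\cup B$ be an arbitrary partition, and write $A''=A\cap V(T'')$, $B''=B\cap V(T'')$, so $A''\cup B''=S''$. If $y_1,y_2$ lie in different parts, then by the observation $z$ belongs to neither $H_T(A)$ nor $H_T(B)$; on $V(T'')$ the two hulls are $H_{T''}(A'')$ and $H_{T''}(B'')$, disjoint since $S''$ is a.r., and on the broom neither hull grows beyond $A$, resp.\ $B$; hence $H_T(A)\cap H_T(B)=\emptyset$. Otherwise $\{y_1,y_2\}$ lies in one part, say $A$; then $z\in H_T(A)$ and $z\notin H_T(B)$ (as $B$ contains no $y_i$). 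Put $T^\ast=(T'')_{v_4\leftarrow z}$. A routine locality check for the $P_3$-hull — using that no vertex of $T''$ is adjacent to a broom vertex, and that $z\notin H_T(B)$ — shows $H_T(A)\cap V(T^\ast)=H_{T^\ast}(A''\cup\{z\})$ and $H_T(B)\cap V(T^\ast)=H_{T^\ast}(B'')$. Since $A''\cup\{z\}$ and $B''$ partition $S''\cup\{z\}$, which is a.r.\ in $T^\ast$ by the pendant-extension claim established inside the proof of Claim~\ref{claim_radon} (applicable because $v_4\notin H_{T''}(S'')$), these hulls are disjoint; and on the broom $H_T(A)\cap H_T(B)=\emptyset$, because $z\notin H_T(B)$ forces every internal broom vertex out of $H_T(B)$ while each leaf $x_i$ sits in only one of $A,B$. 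Hence $S$ is a.r.\ and $\r_2(T)\ge|S|=\r_2^\ast(T'',v_4)+p+2$.

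The one genuinely delicate point is the case $\{y_1,y_2\}\subseteq A$: once $z$ enters $H_T(A)$ the hull can propagate through $v_4$ into $T''$, so one cannot simply restrict to $T''$ and invoke that $S''$ is a.r. Routing this case through $T^\ast=(T'')_{v_4\leftarrow z}$ and the pendant-extension claim is precisely what controls that propagation; the remaining ingredients — the component analysis of $T-z$ and the locality of the $P_3$-hull — are bookkeeping.
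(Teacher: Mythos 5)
Your proof is correct and follows essentially the same route as the paper, which takes the same set $S=S''\cup\{x_1,\ldots,x_p,y_1,y_2\}$ and merely asserts it is a.r.\ ``similarly to the proof of Claim~\ref{claim_radon}''. Your case split on whether $y_1,y_2$ fall in the same part, and the reduction of that case to the pendant-extension claim applied to $(T'')_{v_4\leftarrow z}$, is exactly the intended argument written out in full.
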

\begin{proof}
If $S''$ is a.r.~in $T''$ with $v_4\notin S''$ then similarly to the proof of Claim \ref{claim_radon}, $S=S''\cup\{x_1,\ldots,x_p,y_1,y_2\}$ is a.r.~in $T$.
\end{proof}

Thus we may assume $s=(k-1)p$ i.e.~$s_1=\ldots=s_p=k-1$.
\begin{claim}\label{claim_pairing2}
$t_1=\ldots=t_p=0$.
\end{claim}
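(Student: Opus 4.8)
The plan is to prove the claim by contradiction: assume $t_j\ge 1$ for some $j\in[p]$ and produce a $k$-Radon partition of $R$, contradicting that $R$ is $k$-a.r. Since $s_1=\dots=s_p=k-1$, the pendant paths leaving $z$ through the vertices $y_1,\dots,y_p$ all play symmetric roles, so after relabelling I may assume $j=1$; thus $R$ contains at least one copy of $y_1$ and exactly $k-1$ copies of $x_1$.

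I would then build the partition $R=R_1\cup\dots\cup R_k$ explicitly, aiming to have $y_1$ lie in all $k$ convex hulls. First put one copy of $y_1$ into $R_1$. This leaves exactly $2k-2$ copies of the vertices $y_1,\dots,y_q$ (here I use $t=2k-1$), each of these vertices occurring at most $k-1$ times; moreover $q\ge 2$, since $q=1$ would force $t=t_1\le k-1<2k-1$. Hence the pairing argument from the proof of Claim~\ref{pairing} (applied with $s=2k-2$, so $\sigma=k-1$) splits these $2k-2$ copies into $k-1$ pairs, each pair consisting of copies of two distinct vertices among $y_1,\dots,y_q$. Into each of $R_2,\dots,R_k$ I put one such pair together with one of the $k-1$ copies of $x_1$, and finally I throw all remaining elements of $R$ — the copies of $z$, the copies of $x_2,\dots,x_p$, and all of $R\cap V(T'')$ — into $R_1$. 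This is a partition of $R$ into $k$ nonempty parts.

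It then remains to check that $y_1\in H_T(R_j)$ for every $j\in[k]$. For $j=1$ this is immediate since $y_1\in R_1$. For $j\ge 2$, the part $R_j$ contains copies of two distinct neighbours of $z$, so convexity of $H_T(R_j)$ forces $z\in H_T(R_j)$; since $R_j$ also contains $x_1$, the two neighbours $x_1,z$ of the degree-$2$ vertex $y_1$ then both lie in $H_T(R_j)$, whence $y_1\in H_T(R_j)$. Therefore $y_1\in\bigcap_{j\in[k]}H_T(R_j)$, contradicting that $R$ is $k$-a.r., and so $t_1=\dots=t_p=0$.

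The only step I expect to need any care is the pairing, namely that $2k-2$ copies of $y_1,\dots,y_q$ with all multiplicities at most $k-1$ decompose into $k-1$ pairs of distinct vertices; but this is exactly the elementary combinatorial fact already verified inside the proof of Claim~\ref{pairing}, so it can be quoted rather than reproved. Everything else is a direct hull computation, and notably the argument uses only that $R$ is $k$-a.r., that $s_1=k-1$, and that $t=2k-1$ — not the maximality of $R$.
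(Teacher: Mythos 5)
Your proof is correct, and it follows the same overall strategy as the paper's: assume some $t_j\ge 1$ (say $j=1$), and contradict the $k$-anti-Radon property of $R$ by exhibiting a partition in which every part's hull contains $y_1$, with the $k-1$ copies of $x_1$ and pairs of distinct neighbours of $z$ doing the work. The only real difference is how the copies of $y_1$ are distributed. The paper places all $t_1$ copies of $y_1$ singly into $t_1$ parts and then needs $\phi=k-t_1$ pairs drawn only from $y_2,\ldots,y_q$ with at most $t_i$ copies of each; since the supply there is $2k-1-t_1$ rather than exactly $2\phi$, the pairing no longer follows verbatim from Claim~\ref{pairing}, and the paper has to treat separately the sub-case where some $t_{i_0}\ge k-t_1$. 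You instead set aside a single copy of $y_1$ and feed all remaining $2k-2$ copies of $y_1,\ldots,y_q$ --- including any leftover copies of $y_1$ itself --- into the pairing, which is then exactly the case $s=2k-2$, $\sigma=k-1$ of Claim~\ref{pairing} and can be quoted with no further case analysis. The observation that makes this legitimate is the one you state: a pair may contain $y_1$, since any two distinct neighbours of $z$ pull $z$ into the hull, and then $x_1$ and $z$ together pull in the degree-$2$ vertex $y_1$. This is a modest streamlining of the paper's argument rather than a different route; your closing remark that only $s_1=k-1$, $t=2k-1$, the bounds $t_i\le k-1$, and the $k$-a.r.\ property of $R$ are used (not its maximality) is also accurate.
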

\begin{proof}
Assume otherwise, then without loss of generality $t_1\ge 1$. Let $\phi=k-t_1$.
Similarly to the proof of Claim \ref{pairing} we will show the existence of $a_1,\ldots,a_\phi$, $b_1,\ldots,b_\phi$ such that
\begin{itemize*}
\item
$a_j,b_j\in [2,q]$ and $a_j\neq b_j$ for every $j\in [\phi]$,
\item
$|\{j\in[\phi]\!: a_j=i\}|+|\{j\in[\phi]\!: b_j=i \}|\le t_i$ for every $i\in [2,q]$.
\end{itemize*}
This leads to a contradiction as we can then obtain a $k$-Radon partition of $R$ by putting a copy of $y_1$ in $t_1$ of the sets, and putting a copy of $x_1$ and a pair $y_{a_l},y_{b_l}$ in each of the other $k-t_1$ sets. $y_1$ will be in the intersection of the convex hulls of the sets (here we use the assumption that $s_1=k-1$ so this is indeed possible). 

If $t_i\le k-t_1-1$ for every $i\in [2,q]$, we proceed as in Claim \ref{pairing} to prove the existence of such sequences.
Otherwise, let $i_0$ be such that $t_{i_0}\ge k-t_1$.
Note that 
\begin{equation*}
\sum\limits_{j\neq 1,i_0}t_i=t-t_{i_0}-t_1\ge 2k-1-(k-1)-t_1=k-t_1.
\end{equation*}
Thus in this case we can choose $a_1=\ldots=a_\phi=i_0$ and $b_1,\ldots,b_\phi\in [2,q]\setminus\{i_0\}$ to satisfy the requirements.
\end{proof}
Using Claim \ref{claim_pairing2} it follows that $2k-1=t=t_{p+1}+\ldots+t_q$. As $t_i\le k-1$ for every $i\in [q]$, $q-p\ge 3$.
\begin{claim}
$\r_2(T)\ge \r_2^*(T'',v_4)+3+p$. 
\end{claim}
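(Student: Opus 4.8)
The plan is to follow the template of the proof of Claim~\ref{claim_radon}, this time using the three endvertices $y_{p+1},y_{p+2},y_{p+3}$ of $z$ (these exist since $q-p\ge 3$) in place of $x_1,x_2,x_3$, and afterwards re-attaching, for each $i\in[p]$, the vertices $y_i$ (adjacent to $z$) and $x_i$ (adjacent to $y_i$). Let $S''$ be an a.r.~set in $T''$ with $v_4\notin H_{T''}(S'')$ and $|S''|=\r_2^*(T'',v_4)$; let $\hat T=T\setminus\{x_1,\ldots,x_p,y_1,\ldots,y_p\}$, so that in $\hat T$ the vertex $z$ is joined to $v_4$ and to the endvertices $y_{p+1},\ldots,y_q$ only; and set $\hat S=S''\cup\{y_{p+1},y_{p+2},y_{p+3}\}$. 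I will first show that $\hat S$ is a.r.~in $\hat T$, and then upgrade this to the a.r.~set $S=\hat S\cup\{x_1,\ldots,x_p\}$ of size $|S''|+p+3$ in $T$, which proves the claim.

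For the first step, suppose $\hat S=A\cup B$ is a $2$-Radon partition in $\hat T$. Two of the three endvertices $y_{p+1},y_{p+2},y_{p+3}$ lie in the same part, say $y_{p+1},y_{p+2}\in A$, so $z\in H_{\hat T}(A)$. The key point is that $z\notin H_{\hat T}(B)$: in $\hat T$ the vertex $z$ separates $V(T'')$ from the endvertices $y_{p+1},\ldots,y_q$ and has at most the single neighbour $y_{p+3}$ in $B$ (note $v_4\notin S''$ since $v_4\notin H_{T''}(S'')$), so if $z$ were ever to enter $H_{\hat T}(B)$ then $v_4$ would have to enter $H_{\hat T}(B)$ first, forcing $v_4\in H_{T''}(B\cap V(T''))\subseteq H_{T''}(S'')$, contrary to the choice of $S''$. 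Writing $A'=(A\cap V(T''))\cup\{z\}$ and $B'=B\cap V(T'')$ — both subsets of $V(T')$, where $T'=(T'')_{v_4\leftarrow z}$ — one checks exactly as in Claim~\ref{claim_radon} that $H_{\hat T}(A)\cap H_{\hat T}(B)=H_{T'}(A')\cap H_{T'}(B')$, that $A'\cup B'$ is a partition of $S''\cup\{z\}$, and that $z\in A'$. By the pendant-vertex claim from the proof of Claim~\ref{claim_radon} (that if $v\notin H_T(S)$ for an a.r.~set $S$ then $S\cup\{u\}$ is a.r.~in $T_{v\leftarrow u}$), applied with tree $T''$, vertex $v_4$, set $S''$ and new vertex $z$, the set $S''\cup\{z\}$ is a.r.~in $T'$, so $H_{T'}(A')\cap H_{T'}(B')=\emptyset$, a contradiction. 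Hence $\hat S$ is a.r.~in $\hat T$.

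For the second step I re-attach the pendant vertices one at a time. Attaching to a tree a new leaf that is not put into the set leaves unchanged every convex hull of a subset of the set, so $\hat S$ stays a.r.~after attaching $y_1,\ldots,y_p$ to $z$. Now add $x_1,\ldots,x_p$ successively: at the $i$-th stage $y_i$ is still a leaf, hence not in the convex hull of the current set, and the pendant-vertex claim lets me attach $x_i$ to $y_i$ and add $x_i$ to the set while preserving the anti-Radon property. After $p$ steps this yields the a.r.~set $S=S''\cup\{x_1,\ldots,x_p,y_{p+1},y_{p+2},y_{p+3}\}$ in $T$, and therefore $\r_2(T)\ge|S|=\r_2^*(T'',v_4)+p+3$.

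The only substantial point is the first step — establishing $z\notin H_{\hat T}(B)$ and the ``localisation'' $H_{\hat T}(A)\cap H_{\hat T}(B)=H_{T'}(A')\cap H_{T'}(B')$ — since this is where the hypothesis $v_4\notin H_{T''}(S'')$ is used and where one must argue carefully around the cut vertex $z$; once that is done, the rest is the routine leaf-and-pendant bookkeeping already carried out in Case~1.
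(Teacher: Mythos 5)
Your proposal is correct and takes essentially the same route as the paper: you exhibit the same witness set $S=S''\cup\{x_1,\ldots,x_p,y_{p+1},y_{p+2},y_{p+3}\}$, and your verification that it is a.r.~(localising at the cut vertex $z$ and invoking the pendant-vertex claim from the proof of Claim~\ref{claim_radon}) is precisely the argument the paper leaves implicit behind ``it is easy to see.'' The only difference is that you supply the details the paper omits.
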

\begin{proof}
Let $S''$ be an a.r.~set in $T''$ with $v_4\notin H_{T''}(S'')$. Let $S=S''\cup\{x_1,\ldots,x_p,y_{p+1},y_{p+2},y_{p+3}\}$. It is easy to see that $S$ is a.r.~in $T$.
\end{proof}

Recalling inequality \ref{last_equation}, we obtain
\begin{equation*}
\r_k(T)\le s+2k-1+(k-1)\r_2^*(T'',v_4)\le(k-1)(p+3+\r_2^*(T'',v_4))\le (k-1)\r_2(T).
\end{equation*}
And the proof of Theorem \ref{partition} is complete.

\end{enumerate}

\end{proof}

\section{Proof of Theorem \ref{free}}\label{section_free}
We need the following definition for the proof of Theorem \ref{free}. Let $G$ be a graph, $v\in V(G)$. Define
\begin{align*}
\a ^*(T,v)\triangleq\max\{|A|\!: A\text{ is free and }x\notin A \}.
\end{align*}

\begin{proof}[ of Theorem \ref{free}]
We prove a stronger statement than what is claimed in this theorem. We shall show that for every tree $T$
\begin{itemize}
\item
$\r^*(T,v)\le 2\a^*(T,v)$ for every vertex $v\in V(T)$,
\item
$\r(T)\le 2\a(T)$.
\end{itemize}
We prove these statements by induction on $n=|V(T)|$.
Both statements are clear for $n\le 3$.

To prove the first statement, let $v\in V(T)$, and denote by $v_1,\ldots,v_l$ its neighbours. For every $i\in[l]$, let $T_i$ be the connected component of $v_i$ in $T\setminus \{v\}$. It is easy to see that
\begin{align*}
\a ^*(T,v)= \max_{j\in[k]}\{\sum\limits_{i\neq j}\a^*(T_i,v_i)+\a(T_j)  \}.
\end{align*}
Note the similarity to expression \ref{eqn_radon} from the previous section.
It thus follows  by induction that $\r^*(T,v)\le 2\a^*(T,v)$.

We now proceed to proving that $\r(T)\le 2\a(T)$.
Let $R$ be an a.r.~set of maximal size in $T$.
As in the proof of Theorem \ref{partition}, we can assume that $R$ contains all endvertices of $T$.
The \emph{brothers} of an endvertex $v$ are the endvertices in distance $2$ from $v$.
Then in particular, every endvertex has at most $2$ brothers, as no vertex of $T$ can have more than $3$ neighbours in $R$.

The following claim will be useful in the rest of the proof.
\begin{claim}\label{leafs_free}
Let $T$ be a tree. There exists a free set $A\subseteq V(T)$ of size $\a(T)$ satisfying that for every endvertex $v\in V(T)$ either $v\in A$ or one of its brothers is in $A$.
\end{claim}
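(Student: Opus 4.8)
The plan is to prove Claim~\ref{leafs_free} by induction on $n = |V(T)|$, with the cases $n \le 3$ being immediate. For the inductive step, fix a longest path $v_1, \ldots, v_m$ in $T$ and focus on the vertex $z = v_3$ (together with $y = v_2$ and the leaf $v_1$). As in the proof of Theorem~\ref{partition}, the choice of a longest path forces every neighbour of $z$ other than $v_4$ to be either a leaf or a degree-$2$ vertex adjacent to a leaf; let $y_1, \ldots, y_p$ be the degree-$2$ neighbours (with attached leaves $x_1, \ldots, x_p$) and $y_{p+1}, \ldots, y_q$ the leaf-neighbours of $z$. The idea is to pass to a smaller tree $T'$ obtained by deleting the ``pendant structure'' hanging off $z$ (the $x_i$'s and $y_i$'s, and possibly $z$ itself), apply the induction hypothesis to $T'$ to obtain a good free set $A'$, and then extend $A'$ to a free set $A$ of $T$ of the right size that also satisfies the leaf/brother condition for the newly reintroduced leaves.

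The key steps are as follows. First I would handle the degenerate cases ($z$ having very few neighbours, e.g.\ $q = 1$, or $\deg(z) = 2$) separately, much as the partition proof splits off the case $q=1$; here one typically takes $T' = T \setminus \{x_1, y_1, z\}$ or $T \setminus \{v_1, v_2\}$ and checks directly that adding back a suitable pair of leaves (and dropping a vertex from $A'$ if it lies at the attachment point) keeps the set free and preserves $|A| = \a(T)$. For the main case, set $T' = T \setminus \{x_1, \ldots, x_p, y_1, \ldots, y_q\}$, so $z \in V(T')$ and $z$ is a leaf or near-leaf of $T'$; apply the induction hypothesis to get a free set $A'$ of size $\a(T')$ in which every leaf of $T'$ is covered (is in $A'$ or has a brother in $A'$). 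Then define $A$ from $A'$ by a case distinction on whether $z \in A'$: if $z \notin A'$, add one leaf from each ``arm'' (say $x_i$ if $p \ge 1$, else a leaf-neighbour $y_j$) so that no vertex gains two neighbours in $A$; if $z \in A'$, remove $z$ and add two of the leaves attached to $z$ together with the arm-leaves. One verifies $|A| = \a(T)$ using the recursive formula for $\a$, that $A$ is free (the only vertex at risk is $z$, and the construction ensures it has at most one neighbour in $A$), and that every leaf of $T$ — both the old ones (inherited from the covered $A'$, noting their brothers in $T$ include their brothers in $T'$) and the new ones $x_i, y_j, v_1$ — is covered.

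The main obstacle I expect is the bookkeeping around the condition ``every leaf is in $A$ or has a brother in $A$'' interacting with freeness at the vertex $z$: when $z$ has several pendant arms we must be careful that the leaves we add do not give $z$ two neighbours in $A$, yet we must still cover all the leaves $x_1, \ldots, x_p, y_{p+1}, \ldots, y_q$ and $v_1$ — and two leaves $x_i, x_{i'}$ attached to different $y$'s are \emph{not} brothers of each other (they are at distance $4$), so a single added vertex cannot simultaneously cover leaves on different arms. The resolution is that the $x_i$'s are mutual brothers only within an arm, so each arm's leaf must be covered by something on that arm, and the natural choice (put $x_i \in A$ for each $i$, which covers $x_i$ trivially and whose only neighbour is $y_i \notin A$) handles the degree-$2$ arms, while for a leaf-neighbour $y_j$ of $z$ one puts $y_j \in A$ only if $z \notin A$, else $y_j$ is covered by its brother — any other leaf-neighbour of $z$, or an $x_i$. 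Checking that in every sub-case exactly one of these options is available without violating freeness at $z$ is the delicate part, but it parallels the analysis already carried out in the Case~2 claims of Section~\ref{section_partition}.
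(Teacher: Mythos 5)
Your induction-on-a-longest-path strategy is workable in principle, but as written it has two concrete gaps, both at the points you yourself flag as ``delicate'' and leave unverified. First, the maximality claim $|A|=\a(T)$ does not follow from the construction alone. You need a matching upper bound of the form $\a(T)\le\a(T')+c$ where $c$ is the number of vertices you add back, and in some sub-cases your construction fails to achieve that bound with the particular $A'$ supplied by the induction hypothesis. For instance, in your main case with $T'=T\setminus\{x_1,\ldots,x_p,y_1,\ldots,y_q\}$, one checks $\a(T)\le\a(T')+p+1$ (all the $x_i$ plus at most one $y_j$ can lie in a free set, since two $y$'s would give $z$ two neighbours in the set); but if $z\notin A'$ and $v_4\in A'$, adding any $y_j$ gives $z$ two neighbours in $A$, so you can only add the $p$ vertices $x_1,\ldots,x_p$, and you have no argument that $\a(T)=\a(T')+p$ rather than $\a(T')+p+1$ in that situation. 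Resolving this forces you to either modify $A'$ or track a starred quantity like $\a^*(T',v_4)$, i.e.\ a substantially heavier induction hypothesis. Second, when $z\in A'$ you delete $z$, but $z$ is an endvertex of $T'$ and may be the unique brother in $A'$ of another endvertex $w$ of $T'$ adjacent to $v_4$; such a $w$ is also an endvertex of $T$, and after deleting $z$ it is covered by nothing, so the leaf/brother condition for $T$ can fail. Your proposal does not address this interaction between the exchange at $z$ and the covering condition inherited from $T'$.

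The paper avoids all of this with a short exchange argument requiring no induction and no longest path: take any maximum free set $A$; if an endvertex $v$ with neighbour $u$ is uncovered, then by maximality $A\cup\{v\}$ is not free, so $u$ has some neighbour $w\in A$ with $w\neq v$; if $w$ is an endvertex then $v$ is already covered by a brother, and otherwise $(A\setminus\{w\})\cup\{v\}$ is again free of size $\a(T)$ (only $u$ is affected, and it trades $w$ for $v$). Since $w$ is not an endvertex, removing it cannot uncover any previously covered endvertex (brothers are by definition endvertices), and each swap strictly increases the number of endvertices in $A$, so the process terminates with the desired set. I would recommend abandoning the structural induction here: the local swap is both complete and an order of magnitude shorter.
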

\begin{proof}
Let $A$ be a free set in $T$ of maximal size, $v$ an endvertex in $T$ and $u$  its only neighbour. If $v\in A$ we are done. Otherwise, by the maximality of $A$, $A\cup\{v\}$ is not free. As $u$ is the only neighbour of $v$, there is a neighbour $w\neq v$ of $u$ which is contained in $A$.
If $w$ is an endvertex, we are done. Otherwise, set $A'=(A\backslash\{w\})\cup\{v\}$. Then $A'$ contains $v$ and is free of size $\a(T)$.
Continuing similarly will result in a free set of size $\a(T)$ with the property that for each endvertex either it or one of its brothers is in the set.
\end{proof}

We consider three cases concerning longest paths in $T$. Note that the theorem can be easily verified if the longest path in $T$ has at most $3$ vertices, thus we assume that a longest path in $T$ contains at least $4$ vertices. 
We devote a separate subsection for each case.
\subsection{Case 1.}

There is a longest path $v_1,\ldots,v_m$ such that the component of $v_4$ in $T\setminus\{v_5\}$ has no endvertex in distance $3$ from $v_4$ with brothers.
\\

We consider six cases.
\begin{enumerate}[(a)]
\item\label{case1a}
$v_1,v_2\in R$.\\
Set $T'=T\setminus\{v_1,v_2\}$ (see Figure \ref{fig_free_1a}), $R'=R\cap V(T')$.

\begin{figure}
\centering
\subfloat[Case 1\ref{case1a}]{
\label{fig_free_1a} 
\begin{tikzpicture}
\tikzstyle{bel}=[below=0.05]

\draw (0,0)--(2,0)--(2.8,0.4);
\draw (2,0)--(2.8,-0.7);

\draw [fill=white] (2,0) circle [radius=0.125] ;
\draw [fill] (1,0) circle [radius=0.125];
\draw [fill] (0,0) circle [radius=0.125];

\node [bel] at (2,0) {$v_3$};
\node[bel] at (1,0) {$v_2$}; 
\node[bel] at (0,0) {$v_1$};

\draw [rounded corners=25pt](3,-1)--(1.5,-1)--(1.5,1)--(3,1);
\boldmath
\node at (2.1,0.6){$T'$};

\unboldmath

\end{tikzpicture}
}
\hspace{.5in}
\subfloat[Case 1\ref{case1b}]{
\label{fig_free_1b} 
\begin{tikzpicture}
\tikzstyle{bel}=[below=0.05]

\draw (0,0)--(2,0)--(2.8,0.4);
\draw (2,0)--(2.8,-0.7);

\draw [fill] (2,0) circle [radius=0.125] ;
\draw [fill=white] (1,0) circle [radius=0.125];
\draw [fill] (0,0) circle [radius=0.125];

\node [bel] at (2,0) {$v_3$};
\node[bel] at (1,0) {$v_2$}; 
\node[bel] at (0,0) {$v_1$};

\end{tikzpicture}
}
\\
\vspace{0.2in}
\small{In this figure and the following ones:}
\\ 
\small{a black vertex is in $R$, a white one is not in $R$,}
\\
\small{and for a grey vertex it is unknown if it is in $R$.}

\caption{Cases 1\ref{case1a}, 1\ref{case1b}}.
\end{figure}

$R'$ is a.r.~in $T'$ and $v_3\notin H_{T'}(R')$.
Thus, by induction,
\begin{equation*}
\r(T)=|R'|+2\le \r^*(T',v_3)+2\le 2(\a^*(T',v_3)+1).
\end{equation*}
Note that $\a^*(T',v_3)+1\le \a(T)$, because if $A'\subseteq V(T')\setminus\{v_3\}$ is free, then $A'\cup \{v_1\}$ is free in $T$.
Therefore $\r(T)\le 2\a(T)$ in this case.

\item\label{case1b}
$v_1,v_3\in R$ (see Figure \ref{fig_free_1b}). \\Set $R'=(R\backslash\{v_3\})\cup \{v_2\}$.
It is easy to see that $R'$ is a.r. and it follows from Case 1\ref{case1a} that $\r(T)\le 2\a(T)$.
\end{enumerate}

We can now assume that the above two cases do not occur.
Consider the neighbours of $v_3$ other than $v_4$.
Each such neighbour either is an endvertex, or has degree $2$ and its other neighbour is an endvertex (using the fact $v_1,\ldots,v_m$ is a longest path and that we are in Case 1).
Let $S_i$, $i\in\{1,2\}$, be the set of neighbours of $v_3$ other then $v_4$ with degree $i$. 
Note that $v_2\in S_2$, and by our previous assumptions: $S_1\subseteq R$, $S_2\cap R=\emptyset$. In particular $|S_1|\le 3$. Consider the remaining four cases.

\begin{enumerate}[(a)]
\setcounter{enumi}{2}
\item\label{case1c}
$|S_2|\ge 2$.\\
Let $T'$ be the component of $v_3$ in $T\setminus (S_2\setminus\{v_2\})$ (see Figure \ref{fig_free_1c}).
Then
\begin{equation*}
\r(T')\ge \r(T)-(|S_2|-1),
\end{equation*} 
as $R\cap V(T')$ is a.r.~in $T'$ and $R\setminus V(T')$ contains only the endvertices which are neighbours of vertices in $S_2\setminus\{v_2\}$. Furthermore
\begin{equation*}
\a(T)\ge \a(T')+(|S_2|-1).
\end{equation*}
To see this, let $A'\subseteq V(T')$ be a maximum sized free set in $T'$ containing $v_1$ (recall Claim \ref{leafs_free}). Then $v_3\notin A'$ and the set obtained by adding the endvertices which are neighbours of the vertices in $S_2$ to $A'$ is free in $T$. 
Hence, by induction,
\begin{equation*}
\r(T)\le \r(T')+|S_2|-1<2(\a(T')+|S_2|-1)\le2\a(T).
\end{equation*}
\end{enumerate}
We can now assume that $S_2=\{v_2\}$.

\begin{figure}[t]
\centering
\subfloat[Case 1\ref{case1c}]{
\label{fig_free_1c} 
\begin{tikzpicture}
\tikzstyle{bel}=[below=0.05]

\draw (0,0)--(3,0)--(3.8,0.4);
\draw (3,0)--(3.8,-0.7);
\draw (0,1.4)--(1,1.4)--(2,0)--(1,0.7)--(0,0.7);
\draw (2.4,-0.8)--(2,0)--(1.6,-.8);

\draw [fill=white] (2,0) circle [radius=0.125] ;
\draw [fill=white] (1,0) circle [radius=0.125];
\draw [fill] (0,0) circle [radius=0.125];
\draw [fill] (0,1.4) circle [radius=0.125];
\draw [fill=white] (1,1.4) circle [radius=0.125];
\draw [fill] (0,.7) circle [radius=0.125];
\draw [fill=white] (1,.7) circle [radius=0.125];
\draw [fill=gray] (3,0) circle [radius=0.125];
\draw [fill] (1.6,-.8) circle [radius=0.125];
\draw [fill] (2.4,-.8) circle [radius=0.125];

\node [below, xshift=9] at (2,0) {$v_3$};
\node[bel] at (1,0) {$v_2$}; 
\node[bel] at (0,0) {$v_1$};
\node[bel] at (3,0) {$v_4$};

\draw [rounded corners=15pt](-0.5,-.5)--(-.5,.35)--(1.5,.35)--(2.5,1)--(4,1);
\boldmath
\node at (3,0.7){$T'$};

\unboldmath

\end{tikzpicture}
}
\hspace{0.4in}
\subfloat[Case 1\ref{case1d}]{
\label{fig_free_1d} 
\begin{tikzpicture}
\tikzstyle{bel}=[below=0.05]

\draw (0,0)--(3,0)--(3.8,0.4);
\draw (3,0)--(3.8,-0.7);

\draw (2,0)--(2,-.8);

\draw [fill=white] (2,0) circle [radius=0.125] ;
\draw [fill=white] (1,0) circle [radius=0.125];
\draw [fill] (0,0) circle [radius=0.125];
\draw [fill=gray] (3,0) circle [radius=0.125];
\draw [fill] (2,-.8) circle [radius=0.125];

\node [bel, xshift=6] at (2,0) {$v_3$};
\node[bel] at (1,0) {$v_2$}; 
\node[bel] at (0,0) {$v_1$};
\node[bel] at (3,0){$v_4$};

\draw [rounded corners=25pt](4,-1)--(2.5,-1)--(2.5,1)--(4,1);
\boldmath
\node at (3,0.6){$T'$};

\unboldmath
\end{tikzpicture}
}
\hspace{.4in}
\subfloat[Case 1\ref{case1e}]{
\label{fig_free_1e}
\begin{tikzpicture}
\tikzstyle{bel}=[below=0.05]

\draw (0,0)--(3,0)--(3.8,0.4);
\draw (3,0)--(3.8,-0.7);

\draw (2,0)--(2,-.8);
\draw (2.5,-.8)--(2,0)--(1.5,-.8);

\draw [fill=white] (2,0) circle [radius=0.125] ;
\draw [fill=white] (1,0) circle [radius=0.125];
\draw [fill] (0,0) circle [radius=0.125];
\draw [fill=white] (3,0) circle [radius=0.125];
\draw [fill] (2,-.8) circle [radius=0.125];
\draw [fill] (2.5,-.8) circle [radius=0.125];
\draw [fill] (1.5,-.8) circle [radius=0.125];

\node [below, xshift=9.5] at (2,0) {$v_3$};
\node[bel] at (1,0) {$v_2$}; 
\node[bel] at (0,0) {$v_1$};
\node[bel] at (3,0){$v_4$};

\draw [rounded corners=25pt](4,-1)--(2.65,-1)--(2.65,1)--(4,1);
\boldmath
\node at (3.3,0.7){$T'$};
\unboldmath
\end{tikzpicture}
}
\caption{Case 1\ref{case1c}, 1\ref{case1d}, 1\ref{case1e}}
\end{figure}

\begin{enumerate}[(a)]
\setcounter{enumi}{3}
\item\label{case1d}
$|S_1|\le 1$.\\
Let $T'$ be the connected component of $v_4$ in $T\setminus \{v_3\}$ (see Figure \ref{fig_free_1d}).
Then
$
\r(T')\ge \r(T)-2
$,
as $R\cap V(T')$ is a.r.~in $T'$, and $v_2,v_3\notin R$ (otherwise consider Cases 1\ref{case1a},1\ref{case1b}).
Also
$
\a(T)\ge \a(T')+1
$,
because if $A'\subseteq V(T')$ is free in $T'$ then $A'\cup \{v_1\}$ is free in $T$.
We obtain
\begin{equation*}
\r(T)\le \r(T')+2\le 2(\a(T')+1)\le 2\a(T).
\end{equation*}
\item\label{case1e}
$|S_1|=3$.\\
Let $T'$ be as in the previous case (see Figure \ref{fig_free_1e}) and set $R'=R\cap V(T')$.
Then $v_4\notin H_{T'}(R')$ and $|R|=|R'|+4$.
Also $ \a(T)\ge\a^*(T,v_4)+2$, because if $A'\subseteq V(T')\setminus\{v_4\}$ is free, then $A'\cup\{v_1,v_2\}$ is free in $T$.
Hence
\begin{equation*}
\r(T)\le \r^*(T',v_4)+4\le 2(\a^*(T,v_4)+2)\le 2\a(T).
\end{equation*}

\item\label{case1f}
$|S_1|=2$.\\
Set $T'=T\setminus\{v_1,v_2\}$ (see Figure \ref{fig_free_1f1}). If $T'$ contains a free set of maximal size $A'$ such that $v_3\notin A'$, then $A'\cup\{v_1\}$ is free, so in this case $\a(T)\ge 1+\a(T')$ and 
\begin{equation*}
\r(T)\le 1+\r(T')< 2(1+\a(T'))\le 2\a(T).
\end{equation*}

\begin{figure}[t]
\centering
\subfloat[]{
\label{fig_free_1f1}
\begin{tikzpicture}[xscale=1]
\tikzstyle{bel}=[below=0.05]

\draw (0,0)--(3,0)--(3.8,0.4);
\draw (3,0)--(3.8,-0.7);

\draw (2.4,-0.8)--(2,0)--(1.6,-.8);

\draw [fill=white] (2,0) circle [radius=0.125] ;
\draw [fill=white] (1,0) circle [radius=0.125];
\draw [fill] (0,0) circle [radius=0.125];

\draw [fill=gray] (3,0) circle [radius=0.125];
\draw [fill] (1.6,-.8) circle [radius=0.125];
\draw [fill] (2.4,-.8) circle [radius=0.125];

\node [below, xshift=9] at (2,0) {$v_3$};
\node[bel] at (1,0) {$v_2$}; 
\node[bel] at (0,0) {$v_1$};
\node[bel] at (3,0) {$v_4$};

\draw [rounded corners=20pt](4,-1.2)--(1.3,-1.2)--(1.3,1)--(4,1);
\boldmath
\node at (1.9,0.7){$T'$};
\unboldmath

\end{tikzpicture}
}
\hspace{.5in}
\subfloat[]{
\label{fig_free_1f2}
\begin{tikzpicture}[xscale=1]
\tikzstyle{bel}=[below=0.05]

\draw (0,0)--(4,0)--(4.8,0.4);
\draw (4,0)--(4.8,-0.7);

\draw (2.4,-0.8)--(2,0)--(1.6,-.8);

\draw (0,-1.5)--(2,-1.5);

\draw (2.4,-2.3)--(2,-1.5)--(1.6,-2.3);

\draw [rounded corners=30pt](2,-1.5)--(3,-1.5)--(3,0);

\draw [fill=white] (2,0) circle [radius=0.125] ;
\draw [fill=white] (1,0) circle [radius=0.125];
\draw [fill] (0,0) circle [radius=0.125];

\draw [fill=gray] (3,0) circle [radius=0.125];
\draw [fill=gray] (4,0) circle [radius=0.125];
\draw [fill] (1.6,-.8) circle [radius=0.125];
\draw [fill] (2.4,-.8) circle [radius=0.125];

\draw [fill=white] (2,-1.5) circle [radius=0.125] ;
\draw [fill=white] (1,-1.5) circle [radius=0.125];
\draw [fill] (0,-1.5) circle [radius=0.125];

\draw [fill] (1.6,-2.3) circle [radius=0.125];
\draw [fill] (2.4,-2.3) circle [radius=0.125];

\node [below, xshift=9] at (2,0) {$v_3$};
\node[bel] at (1,0) {$v_2$}; 
\node[bel] at (0,0) {$v_1$};
\node[bel, xshift=6] at (3,0) {$v_4$};
\node[bel] at (4,0) {$v_5$};

\draw [rounded corners=20pt](5,-1)--(3.5,-1)--(3.5,1)--(5,1);
\draw [rounded corners=20pt] (-.5,-2)--(-.5,-1)--(1.5,-1)--(1.5,1.3)--(5,1.3);
\boldmath
\node at (2,.9){$T'$};
\node at (4.1,0.7){$T''$};
\unboldmath

\end{tikzpicture}
}
\caption{Case 1\ref{case1f}}
\end{figure}

Therefore we may assume that $v_3$ is contained in every maximum sized free set of $T'$.
Let $S$ be the set of neighbours of $v_4$ other than $v_5$ and for $v\in S$ let $T_v$ be the connected component of $v$ in $T\setminus\{v_4\}$.

We need the following claim.
\begin{claim}\label{claim_free_depth_two}
$T_v$ has depth $2$ as a tree rooted in $v$ for every $v\in S$. 
\end{claim}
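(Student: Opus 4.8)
The plan is to bound the depth of $T_v$ (the largest distance in $T_v$ from the root $v$) both from above and from below, for every $v\in S$. For the upper bound I would argue directly from the choice of $v_1,\ldots,v_m$ as a longest path: if some $u\in V(T_v)$ satisfied $\dist_{T_v}(v,u)\ge 3$, then $\dist_T(v_4,u)\ge 4$, and a shortest $u$--$v_4$ path (which lies in $V(T_v)\cup\{v_4\}$) followed by the tail $v_4,v_5,\ldots,v_m$ — a path vertex-disjoint from $T_v$ — would be a path of length at least $m$ in $T$, contradicting maximality. Hence $T_v$ has depth at most $2$, and the real work is the lower bound.

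For the lower bound I would argue by contradiction, using the standing hypothesis of the present case, that $v_3$ belongs to every maximum free set of $T'=T\setminus\{v_1,v_2\}$. Assume $T_v$ has depth at most $1$ for some $v\in S$. Since $T_{v_3}$ contains the length-$2$ path $v_3,v_2,v_1$, we have $v\ne v_3$, so $V(T_v)$ is disjoint from $V(T_{v_3})=\{v_1,v_2,v_3\}\cup S_1$; and since $T_v$ is a star centred at $v$ (possibly a single vertex), every free set of $T_v$ avoiding $v$ has at most one vertex, whereas $T_v$ has a free set $B_v$ with $v\in B_v$ and $|B_v|=\a^*(T_v,v)+1$ (namely $\{v\}$ if $v$ is isolated in $T_v$, otherwise $\{v,u\}$ for a leaf $u$). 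Now take any maximum free set $A'$ of $T'$; by hypothesis $v_3\in A'$. In $T'$ the neighbours of $v_3$ are exactly the two endvertices forming $S_1$ and $v_4$, so freeness at $v_3$ gives $|A'\cap(\{v_3,v_4\}\cup S_1)|\le 2$; and since the neighbour $v_3$ of $v_4$ lies in $A'$, freeness at $v_4$ forces every other neighbour of $v_4$, in particular $v$, out of $A'$, so $|A'\cap V(T_v)|\le\a^*(T_v,v)$. I would then perform the obvious surgery, moving mass from the $v_3$-side to the $v$-side: fixing $x\in S_1$, set
\[
A''=\bigl(A'\setminus(\{v_3,v_4\}\cup S_1\cup V(T_v))\bigr)\cup\{x\}\cup B_v .
\]
By the disjointness recorded above this is a disjoint-union count, giving $|A''|\ge|A'|-2-\a^*(T_v,v)+1+(\a^*(T_v,v)+1)=|A'|$, so $A''$ is again a maximum free set of $T'$; but $v_3\notin A''$, contradicting the hypothesis. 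Thus $T_v$ has depth exactly $2$.

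The step that genuinely requires care — and the main obstacle — is verifying that $A''$ is free in $T'$. The key point is the behaviour at $v_4$: because $v_3\in A'$ had already pushed every neighbour of $v_4$ other than $v_3$ out of $A'$ (and this held regardless of whether $v_4$ itself lay in $A'$), the set $A''$ contains exactly one neighbour of $v_4$, namely $v\in B_v$; this uniformity is exactly what lets the argument avoid splitting into the subcases $v_4\in A'$ and $v_4\notin A'$. The remaining verifications are routine: the vertex $v_3$ (now outside $A''$) has only $x$ as an $A''$-neighbour, the endvertices in $S_1$ have none, $A''$ restricts to the free set $B_v$ on $V(T_v)$, and on every other component of $T'\setminus\{v_4\}$ it coincides with the free set $A'$.
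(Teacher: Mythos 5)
Your proof is correct and follows essentially the same strategy as the paper: depth at most $2$ from the maximality of the path $v_1,\ldots,v_m$, and depth at least $2$ by an exchange argument that turns a maximum free set of $T'$ containing $v_3$ into one of equal size avoiding $v_3$. The paper's exchange is just the leaner swap $(A'\setminus\{v_3\})\cup\{v\}$ — the observation that $v_3\in A'$ already forces all other neighbours of $v_4$ (and, if $T_v$ is a star, all of $V(T_v)$ except possibly one leaf) out of $A'$ makes your larger surgery unnecessary, though it is valid.
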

\begin{proof}
Let $v\in S$, $v\neq v_3$ (note that the claim is clear if $v=v_3$).
By the choice of $v_1,\ldots,v_m$ as a longest path in $T$, $T_v$ has depth at most $2$.
We now show that $T_v$ has depth at least $2$, i.e.~$v$ has neighbours which are not endverties.
Let $A'$ be a free set of maximal size in $T'$.
If $v$ has no neighbour in $T_v$ which is not an endvertex, then $(A'\setminus\{v_3\})\cup\{v\}$ is also a free set of the same size in $T'$, contradicting our previous  assumption. 
\end{proof}

If for some $v\in S$, $T_v$ is not isomorphic to $T_{v_3}$ (as rooted trees at $v$,$v_3$ respectively), by changing the selected longest path to go through $v$ instead of $v_3$, we go back to one of the previous cases.
Thus we may assume that the trees $T_v$,  $v\in S$, are all isomorphic to $T_{v_3}$.

Let $T''$ be the component of $v_5$ in $T\bs\{v_4\}$ (see Figure \ref{fig_free_1f2}), $R''=R\cap V(T'')$.
Then $|R\bs R''|\le 3|S|+1$ as for each $v\in S$, $|R\cap V(T_v)|=3$ and possibly $v_4$ in $R$.
Note also that $\a(T)\ge \a(T'')+2|S|$, because the union of a free set of $T''$ with the endvertices in distance $3$ from $v_4$ and their neighbours is free in $T$.
Therefore
\begin{equation*}
\r(T)\le \r(T'')+3|S|+1\le 2\a(T'')+4|S|\le2\a(T).
\end{equation*}

\end{enumerate}

\subsection{Case 2.}

Case 1 does not hold, and there exists a longest path $v_1,\ldots,v_m$ such that the component of $v_4$ in $T\setminus\{v_5\}$ has no endvertices in distance $4$ from $v_4$ with more than one brother.
\\

Choose the longest path such that $v_1$ has exactly one brother $v_1'$.
Then $v_1,v_1'\in R$ and as $R$ is a.r., $v_2\notin R$.
We consider the neighbours of $v_3$ other than $v_4$. Note that they can be of degrees $1$, $2$ or $3$ only and that if they have degree $2$ or $3$ the other neighbours are endvertices.
Let $S_i$, $i\in\{1,2,3\}$, be the set of neighbours of $v_3$ other than $v_4$ with degree $i$. Consider the following six cases.

\begin{enumerate}[(a)]
\item\label{case2a}
$S_2\neq \emptyset$. \\
Let $T'$ be the component of $v_3$ in $T\setminus S_2$ (i.e.~remove all neighbours of $v_3$ of degree $2$, see Figure \ref{fig_free_2a}).
Then, by induction
\begin{equation*}
\r(T)\le \r(T')+2|S_2|\le 2(\a(T')+|S_2|)\le 2\a(T).
\end{equation*}
The last inequality follows from the fact that a maximum free set in $T'$ can be assumed to contain $v_1$ (see Claim \ref{leafs_free}), so it does not contain $v_3$ and we can add the $|S_2|$ endvertices that were discarded to obtain a free set in $T$.
\end{enumerate}

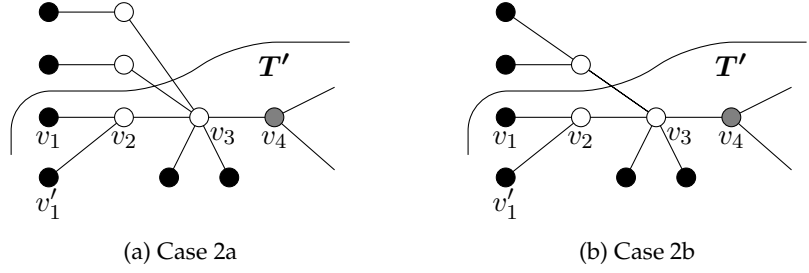
\begin{figure}[t]\centering
\subfloat[Case 2\ref{case2a}]{
\label{fig_free_2a}
\begin{tikzpicture}
\tikzstyle{bel}=[below=0.05]

\draw (0,0)--(3,0)--(3.8,0.4);
\draw (3,0)--(3.8,-0.7);
\draw (0,1.4)--(1,1.4)--(2,0)--(1,0.7)--(0,0.7);
\draw (2.4,-0.8)--(2,0)--(1.6,-.8);
\draw (1,0)--(0,-0.8);

\draw [fill] (0,-.8) circle [radius=.125];
\draw [fill=white] (2,0) circle [radius=0.125] ;
\draw [fill=white] (1,0) circle [radius=0.125];
\draw [fill] (0,0) circle [radius=0.125];
\draw [fill] (0,1.4) circle [radius=0.125];
\draw [fill=white] (1,1.4) circle [radius=0.125];
\draw [fill] (0,.7) circle [radius=0.125];
\draw [fill=white] (1,.7) circle [radius=0.125];
\draw [fill=gray] (3,0) circle [radius=0.125];
\draw [fill] (1.6,-.8) circle [radius=0.125];
\draw [fill] (2.4,-.8) circle [radius=0.125];

\node [below, xshift=9] at (2,0) {$v_3$};
\node[bel] at (1,0) {$v_2$}; 
\node[bel] at (0,0) {$v_1$};
\node[bel] at (3,0) {$v_4$};
\node[bel,yshift=1] at (0,-.8){$v_1'$};

\draw [rounded corners=15pt](-0.5,-.5)--(-.5,.35)--(1.5,.35)--(2.5,1)--(4,1);
\boldmath
\node at (3,0.7){$T'$};
\unboldmath

\end{tikzpicture}
}
\hspace{.5in}
\subfloat[Case 2\ref{case2b}]{
\label{fig_free_2b}
\begin{tikzpicture}
\tikzstyle{bel}=[below=0.05]

\draw (0,0)--(3,0)--(3.8,0.4);
\draw (3,0)--(3.8,-0.7);
\draw (0,1.4)--(2,0)--(1,0.7)--(0,0.7);
\draw (2.4,-0.8)--(2,0)--(1.6,-.8);
\draw (1,0)--(0,-0.8);

\draw [fill] (0,-.8) circle [radius=.125];
\draw [fill=white] (2,0) circle [radius=0.125] ;
\draw [fill=white] (1,0) circle [radius=0.125];
\draw [fill] (0,0) circle [radius=0.125];
\draw [fill] (0,1.4) circle [radius=0.125];

\draw [fill] (0,.7) circle [radius=0.125];
\draw [fill=white] (1,.7) circle [radius=0.125];
\draw [fill=gray] (3,0) circle [radius=0.125];
\draw [fill] (1.6,-.8) circle [radius=0.125];
\draw [fill] (2.4,-.8) circle [radius=0.125];

\node [below, xshift=9] at (2,0) {$v_3$};
\node[bel] at (1,0) {$v_2$}; 
\node[bel] at (0,0) {$v_1$};
\node[bel] at (3,0) {$v_4$};
\node[bel,yshift=1] at (0,-.8){$v_1'$};

\draw [rounded corners=15pt](-0.5,-.5)--(-.5,.35)--(1.5,.35)--(2.5,1)--(4,1);
\boldmath
\node at (3,0.7){$T'$};
\unboldmath

\end{tikzpicture}
}
\caption{Cases 2\ref{case2a}, 2\ref{case2b}}

\end{figure}

We now assume $S_2=\emptyset$.
\begin{enumerate}[(a)]
\setcounter{enumi}{1}
\item\label{case2b}
$|S_3|\ge 2$.\\
Set $T'$ to be the component of $v_3$ in $T\setminus(S_3\setminus\{v_3\})$ (see Figure \ref{fig_free_2b}).
$R$ contains $2(|S_3|-1)$ of the discarded vertices, thus, as in the previous case,
\begin{equation*}
\r(T)\le \r(T')+2(|S_3|-1)\le 2(\a(T')+(|S_3|-1))\le 2\a(T).
\end{equation*}
\end{enumerate}

Hence we can assume $|S_3|=1$.
Note that $|S_1|\le 2$, since $R$ is a.r..
\begin{enumerate}[(a)]
\setcounter{enumi}{2}
\item\label{case2c}
$|S_1|=2$.\\
Set $T'$ the component of $v_4$ in $T\setminus\{v_3\}$ (see Figure \ref{fig_free_2c}).
Then
\begin{equation*}
\r(T)\le 4+\r^*(T',v_4)\le 2(2+\a^*(T',v_4))\le 2\a(T),
\end{equation*}
since $v_4\notin H_{T'}(R\cap V(T'))$ ($R$ is a.r.) and the union of a free set in $V(T')\setminus\{v_4\}$ with $\{v_1,v_2\}$ remains free.

\begin{figure}[t] \centering
\subfloat[Case 2\ref{case2c}]{
\label{fig_free_2c}
\begin{tikzpicture}
\tikzstyle{bel}=[below=0.05]

\draw (1,0)--(3,0)--(3.8,0.4);
\draw (3,0)--(3.8,-0.7);
\draw (.2,.4)--(1,0)--(.2,-.4);
\draw (2.4,-0.8)--(2,0)--(1.6,-.8);

\draw [fill] (0.2,-.4) circle [radius=.125];
\draw [fill=white] (2,0) circle [radius=0.125] ;
\draw [fill=white] (1,0) circle [radius=0.125];
\draw [fill] (0.2,0.4) circle [radius=0.125];

\draw [fill=white] (3,0) circle [radius=0.125];
\draw [fill] (1.6,-.8) circle [radius=0.125];
\draw [fill] (2.4,-.8) circle [radius=0.125];

\node [below, xshift=9] at (2,0) {$v_3$};
\node[bel] at (1,0) {$v_2$}; 
\node[bel] at (.2,0.4) {$v_1$};
\node[bel] at (3,0) {$v_4$};
\node[bel,yshift=1] at (0.2,-.4){$v_1'$};

\draw [rounded corners=20pt](4,-1)--(2.6,-1)--(2.6,1)--(4,1);
\boldmath
\node at (3.1,0.7){$T'$};
\unboldmath

\end{tikzpicture}
}
\hspace{.5in}
\subfloat[Case 2\ref{case2d}]{
\label{fig_free_2d}
\begin{tikzpicture}
\tikzstyle{bel}=[below=0.05]

\draw (1,0)--(3,0)--(3.8,0.4);
\draw (3,0)--(3.8,-0.7);
\draw (.2,.4)--(1,0)--(.2,-.4);

\draw [fill] (0.2,-.4) circle [radius=.125];
\draw [fill] (2,0) circle [radius=0.125] ;
\draw [fill=white] (1,0) circle [radius=0.125];
\draw [fill] (0.2,0.4) circle [radius=0.125];

\draw [fill=white] (3,0) circle [radius=0.125];

\node [below, xshift=9] at (2,0) {$v_3$};
\node[bel] at (1,0) {$v_2$}; 
\node[bel] at (.2,0.4) {$v_1$};
\node[bel] at (3,0) {$v_4$};
\node[bel,yshift=1] at (0.2,-.4){$v_1'$};

\draw [rounded corners=20pt](4,-1)--(2.6,-1)--(2.6,1)--(4,1);
\boldmath
\node at (3.1,0.7){$T'$};
\unboldmath

\end{tikzpicture}
}
\hspace{.5in}
\subfloat[Case 2\ref{case2e}]{
\label{fig_free_2e}
\begin{tikzpicture}
\tikzstyle{bel}=[below=0.05]

\draw (1,0)--(3,0)--(3.8,0.4);
\draw (3,0)--(3.8,-0.7);
\draw (.2,.4)--(1,0)--(.2,-.4);

\draw [fill] (0.2,-.4) circle [radius=.125];
\draw [fill=white] (2,0) circle [radius=0.125] ;
\draw [fill=white] (1,0) circle [radius=0.125];
\draw [fill] (0.2,0.4) circle [radius=0.125];

\draw [fill=gray] (3,0) circle [radius=0.125];

\node [below, xshift=9] at (2,0) {$v_3$};
\node[bel] at (1,0) {$v_2$}; 
\node[bel] at (.2,0.4) {$v_1$};
\node[bel] at (3,0) {$v_4$};
\node[bel,yshift=1] at (0.2,-.4){$v_1'$};

\draw [rounded corners=20pt](4,-1)--(2.6,-1)--(2.6,1)--(4,1);
\boldmath
\node at (3.1,0.7){$T'$};
\unboldmath

\end{tikzpicture}
}
\caption{Cases 2\ref{case2c}, 2\ref{case2d}, 2\ref{case2e}}
\end{figure}

\item\label{case2d}
$S_1=\emptyset$ and $v_3\in R$.\\
Choose $T'$ as in the previous case (see Figure \ref{fig_free_2d}). Again $v_4\notin H_{T'}(R\cap V(T'))$ and similarly
\begin{equation*}
\r(T)\le 3+\r^*(T',v_4)< 2(2+\a^*(T',v_4))\le 2\a(T).
\end{equation*}

\item\label{case2e}
$S_1=\emptyset$ and $v_3\notin R$.\\
Again set $T'$ as before (see Figure \ref{fig_free_2e}).
Here $R$ contains only $2$ of the discarded vertices and we can add $v_1$ to any free set of $T'$ to obtain a free set of $T$. Thus
\begin{equation*}
\r(T)\le\r(T')+2\le 2(\a(T')+1)\le 2\a(T).
\end{equation*}

\item\label{case2f}
All previous cases do not hold, i.e.~
$|S_1|=|S_3|=1$ and $S_2=\emptyset$.\\
Set $T'=T\setminus\{v_1,v_1',v_2\}$ (see Figure \ref{fig_free_2f1}).
If $T'$ contains a maximum free set $A'$ with $v_3\notin A'$, then $A'\cup\{v_1\}$ is free in $T$ and thus
\begin{equation*}
\r(T)\le \r(T')+2\le 2(1+\a(T'))\le 2\a(T).
\end{equation*}

\begin{figure}[t] \centering
\subfloat[]{
\label{fig_free_2f1}
\begin{tikzpicture}
\tikzstyle{bel}=[below=0.05]

\draw (1,0)--(3,0)--(3.8,0.4);
\draw (3,0)--(3.8,-0.7);
\draw (.2,.4)--(1,0)--(.2,-.4);
\draw (2,0)--(2,-.8);

\draw [fill] (0.2,-.4) circle [radius=.125];
\draw [fill=white] (2,0) circle [radius=0.125] ;
\draw [fill=white] (1,0) circle [radius=0.125];
\draw [fill] (0.2,0.4) circle [radius=0.125];

\draw [fill=gray] (3,0) circle [radius=0.125];

\draw [fill] (2,-0.8) circle [radius=0.125];

\node [bel, xshift=6.5] at (2,0) {$v_3$};
\node[bel] at (1,0) {$v_2$}; 
\node[bel] at (.2,0.4) {$v_1$};
\node[bel] at (3,0) {$v_4$};
\node[bel,yshift=1] at (0.2,-.4){$v_1'$};

\draw [rounded corners=20pt](4,-1.2)--(1.5,-1.2)--(1.5,1)--(4,1);
\boldmath
\node at (2.1,0.7){$T'$};
\unboldmath

\end{tikzpicture}
}
\hspace{.5in}
\subfloat[]{
\label{fig_free_2f2}
\begin{tikzpicture}
\tikzstyle{bel}=[below=0.05]

\draw (1,0)--(4,0)--(4.8,0.4);
\draw (4,0)--(4.8,-0.7);
\draw (.2,.4)--(1,0)--(.2,-.4);
\draw (2,-0.8)--(2,0);

\draw (0,-1.5)--(2,-1.5);

\draw (2.4,-2.3)--(2,-1.5)--(1.6,-2.3);

\draw [rounded corners=30pt](2,-1.5)--(3,-1.5)--(3,0);

\draw [fill=white] (2,0) circle [radius=0.125] ;
\draw [fill=white] (1,0) circle [radius=0.125];
\draw [fill] (0.2,0.4) circle [radius=0.125];
\draw [fill] (0.2,-0.4) circle [radius=0.125];

\draw [fill=gray] (3,0) circle [radius=0.125];
\draw [fill=gray] (4,0) circle [radius=0.125];
\draw [fill] (2,-.8) circle [radius=0.125];

\draw [fill=white] (2,-1.5) circle [radius=0.125] ;
\draw [fill=white] (1,-1.5) circle [radius=0.125];
\draw [fill] (0,-1.5) circle [radius=0.125];

\draw [fill] (1.6,-2.3) circle [radius=0.125];
\draw [fill] (2.4,-2.3) circle [radius=0.125];

\node [bel, xshift=6.5] at (2,0) {$v_3$};
\node[bel] at (1,0) {$v_2$}; 
\node[bel] at (0.2,0.4) {$v_1$};
\node[bel] at (0.2,-0.4) {$v_1'$};
\node[bel, xshift=6] at (3,0) {$v_4$};
\node[bel] at (4,0) {$v_5$};

\draw [rounded corners=20pt](5,-1)--(3.5,-1)--(3.5,1)--(5,1);
\draw [rounded corners=20pt] (-.5,-2)--(-.5,-1.1)--(1.5,-1.1)--(1.5,1.3)--(5,1.3);
\boldmath
\node at (2.1,1){$T'$};
\node at (4.1,0.7){$T''$};
\unboldmath

\end{tikzpicture}
}
\caption{Case 2\ref{case2f}}
\end{figure}

Therefore, we may assume that every maximum free set of $T'$ contains $v_3$.
As in Case 1\ref{case1f}, let $S$ be the set of neighbours of $v_4$ different from $v_5$ and for $v\in S$ define $T_v$ to be the component of $v$ in $T\setminus\{v_4\}$.
Similarly to Claim \ref{claim_free_depth_two}, $T_v$ has depth $2$ as a tree rooted at $v$ for every $v\in S$.
If $T_v$ is not isomorphic to $T_{v_3}$ or to the graph in Case 1\ref{case1f} (as rooted trees), by changing the longest path to go through $v$, we can continue as before. (Note that in all but the present case and Case 1\ref{case1f}, we did not consider other neighbours of $v_4$).

Set $T''$ to be the component of $v_5$ in $T\setminus\{v_4\}$ (see Figure \ref{fig_free_2f2}).
$R$ contains three vertices of $T_v$ for every $v\in S$ and possibly it contains $v_4$ as well. Also $\a(T)\ge\a(T'')+2|S|$, as we can two vertices from each $T_v$ to a free set of $T''$ to obtain a free set. Thus
\begin{equation*}
\r(T)\le \r(T'')+3|S|+1\le 2(\a(T'')+2|S|)\le 2\a(T).
\end{equation*}

\end{enumerate}

\subsection{Case 3.}
For every choice of a longest path $v_1,\ldots,v_m$ the connected component of $v_4$ in $T\setminus\{v_5\}$ has an endvertex with $2$ brothers in distance $3$ from $v_4$.
\\

We choose the longest path such that $v_1$ has $2$ brothers $v_1'$ and $v_1''$.
Then $v_1,v_1',v_1''\in R$, so $v_2,v_3\notin R$.

Similarly to Cases 2\ref{case2a},2\ref{case2b}, we can assume that all the neighbours of $v_3$ are endvertices (except for maybe $v_4$) having one or two endvertices as neighbours. By the choice of $v_1,\ldots,v_m$ as a longest path, the neighbours of $v_3$ other than $v_4$ are either endvertices or have degree $4$ and are neighbours to $3$ endvertices.
Thus, as $R$ is a.r., $v_3$ can have degree $2$ or $3$ only.
Set $T'$ to be the component of $v_4$ in $T\setminus \{v_3\}$.
We consider seven possible cases.

\begin{enumerate}[(a)]
\item\label{case3a}
$v_3$ has degree $3$ with the only neighbour other than $v_2$ and $v_4$ being an endvertex (see Figure \ref{fig_free_3a}).\\
Then $v_4\notin H_{T'}(R\cap V(T'))$.
Note that $\a(T)\ge\a^*(T',v_4)+2$, as we can add $v_1$ and $v_2$ to a free set in $V(T')\setminus\{v_4\}$ to obtain a free set.
Thus
\begin{equation*}
\r(T)\le \r^*(T',v_4)+4\le 2(\a^*(T',v_4)+2)\le 2\a(T).
\end{equation*}

\begin{figure}[t] \centering
\subfloat[Case 3\ref{case3a}]{
\label{fig_free_3a}
\begin{tikzpicture}
\tikzstyle{bel}=[below=0.05]

\draw (0.2,0)--(3,0)--(3.8,0.4);
\draw (3,0)--(3.8,-0.7);
\draw (.2,.6)--(1,0)--(.2,-.6);
\draw (2,0)--(2,-.8);

\draw [fill=white] (2,0) circle [radius=0.125] ;
\draw [fill=white] (1,0) circle [radius=0.125];
\draw [fill] (0.2,0.6) circle [radius=0.125];
\draw [fill] (0.2,-.6) circle [radius=.125];
\draw [fill] (0.2,0) circle [radius=.125];
\draw [fill=white] (3,0) circle [radius=0.125];

\draw [fill] (2,-0.8) circle [radius=0.125];

\node [bel, xshift=6.5] at (2,0) {$v_3$};
\node[bel] at (1,0) {$v_2$}; 
\node[left] at (.2,0.6) {$v_1$};
\node[left] at (.2,0) {$v_1'$};
\node[left] at (.2,-0.6) {$v_1''$};
\node[bel] at (3,0) {$v_4$};

\draw [rounded corners=20pt](4,-1)--(2.5,-1)--(2.5,1)--(4,1);
\boldmath
\node at (3.1,0.7){$T'$};
\unboldmath

\end{tikzpicture}
}
\hspace{.3in}
\subfloat[Case 3\ref{case3b}]{
\label{fig_free_3b}
\begin{tikzpicture}
\tikzstyle{bel}=[below=0.05]

\draw (0.2,0)--(3,0)--(3.8,0.4);
\draw (3,0)--(3.8,-0.7);
\draw (.2,.6)--(1,0)--(.2,-.6);
\draw (2,0)--(2,-1.8);
\draw (2.6,-1.8)--(2,-1)--(1.4,-1.8);

\draw [fill=white] (2,0) circle [radius=0.125] ;
\draw [fill=white] (1,0) circle [radius=0.125];
\draw [fill] (0.2,0.6) circle [radius=0.125];
\draw [fill] (0.2,-.6) circle [radius=.125];
\draw [fill] (0.2,0) circle [radius=.125];
\draw [fill=white] (3,0) circle [radius=0.125];

\draw [fill=white] (2,-1)circle [radius=0.125];
\draw [fill] (2,-1.8) circle [radius=0.125];
\draw [fill] (2.6,-1.8) circle [radius=0.125];
\draw [fill] (1.4,-1.8) circle [radius=0.125];

\node [bel, xshift=6.5] at (2,0) {$v_3$};
\node[bel] at (1,0) {$v_2$}; 
\node[left] at (.2,0.6) {$v_1$};
\node[left] at (.2,0) {$v_1'$};
\node[left] at (.2,-0.6) {$v_1''$};
\node[bel] at (3,0) {$v_4$};
\node[right,xshift=1] at (2,-1) {$u$};

\draw [rounded corners=20pt](4,-1)--(2.5,-1)--(2.5,1)--(4,1);
\boldmath
\node at (3.1,0.7){$T'$};
\unboldmath

\end{tikzpicture}
}
\hspace{.3in}
\subfloat[Case 3\ref{case3c}]{
\label{fig_free_3c}
\begin{tikzpicture}
\tikzstyle{bel}=[below=0.05]

\draw (0.2,0)--(3,0)--(3.8,0.4);
\draw (3,0)--(3.8,-0.7);
\draw (.2,.6)--(1,0)--(.2,-.6);

\draw [fill=white] (2,0) circle [radius=0.125] ;
\draw [fill=white] (1,0) circle [radius=0.125];
\draw [fill] (0.2,0.6) circle [radius=0.125];
\draw [fill] (0.2,-.6) circle [radius=.125];
\draw [fill] (0.2,0) circle [radius=.125];
\draw [fill=gray] (3,0) circle [radius=0.125];

\node [bel] at (2,0) {$v_3$};
\node[bel] at (1,0) {$v_2$}; 
\node[left] at (.2,0.6) {$v_1$};
\node[left] at (.2,0) {$v_1'$};
\node[left] at (.2,-0.6) {$v_1''$};
\node[bel] at (3,0) {$v_4$};

\draw [rounded corners=20pt](4,-1)--(2.5,-1)--(2.5,1)--(4,1);
\boldmath
\node at (3.1,0.7){$T'$};
\unboldmath

\end{tikzpicture}
}
\caption{Cases 3\ref{case3a}, 3\ref{case3b}, 3\ref{case3c}}
\end{figure}

\item\label{case3b}
$v_3$ has degree $3$ with the only neighbour other than $v_2$ and $v_4$, $u$, having three neighbours which are endvertices (see Figure \ref{fig_free_3b}).\\
Then again $v_4\notin H_{T'}(R\cap V(T'))$, and $\a(T)\ge 3+\a^*(T',v_4)$, as we can add $v_1$, $v_2$ and an endvertex which is a neighbour of $u$ to a free set of $T'$. Thus
\begin{equation*}
\r(T)\le 6+\r^*(T',v_4)\le 2(3+\a^*(T',v_4))\le 2\a^*(T).
\end{equation*}
\end{enumerate}

In the remaining cases we assume that $v_3$ has degree $2$.
Let  $T''$ be the component of $v_5$ in $T\setminus\{v_4\}$.
\begin{enumerate}[(a)]
\setcounter{enumi}{2}
\item\label{case3c}
$T'$ has a maximum free set $A'$ with $v_4\notin A'$ (see Figure \ref{fig_free_3c}).
\\
Then $A'\cup \{v_1,v_2\}$ is free and 
\begin{equation*}
\r(T)\le 3+\r(T')<2(\a(T')+2)\le 2\a(T).
\end{equation*}
\end{enumerate}

We may  now assume that every maximum free set of $T'$ contains $v_4$.
Let $S$ be the set of neighbours of $v_4$ other than $v_3$ and $v_5$.
\begin{claim}\label{claim_free_S_endvertices}
The vertices in $S$ are endvertices in $T$.
\end{claim}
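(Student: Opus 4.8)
The plan is to argue by contradiction. Suppose some vertex $v \in S$ is not an endvertex and let $T_v$ be the component of $v$ in $T \setminus \{v_4\}$, viewed as rooted at $v$. Because $v_1, \ldots, v_m$ is a longest path of $T$, the depth of $T_v$ is at most $2$: a vertex at depth $3$ in $T_v$ would, together with the portion $v, v_4, v_5, \ldots, v_m$, span a path of length $m$, longer than $v_1 \cdots v_m$. Since $v$ is not an endvertex, $T_v$ has depth $1$ or $2$. I will use this to produce a maximum free set of $T'$ avoiding $v_4$, which contradicts the standing assumption that every maximum free set of $T'$ contains $v_4$.

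Fix a maximum free set $A$ of $T'$ of the type provided by Claim \ref{leafs_free}; by assumption $v_4 \in A$. As $A$ is free and $v$ is adjacent to $v_4 \in A$, the vertex $v_4$ is the only neighbour of $v$ that lies in $A$; in particular no child of $v$ in $T_v$ belongs to $A$. If $v$ had a leaf-child, its brothers in $T'$ would be precisely the other leaf-children of $v$ (note $v_4$ is not an endvertex of $T'$), so by Claim \ref{leafs_free} some leaf-child of $v$ would lie in $A$ --- impossible. Hence every child of $v$ is a non-endvertex; pick one, say $w$. Since $T_v$ has depth at most $2$, $w$ has at least one leaf-child, and applying Claim \ref{leafs_free} to one of them (whose brothers are the remaining leaf-children of $w$) we obtain a leaf-child $\ell^*$ of $w$ with $\ell^* \in A$. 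Now $w \notin A$, since no child of $v$ lies in $A$; and in fact $v \notin A$ as well, for if $v \in A$ then, $v_4$ being $v$'s only $A$-neighbour, the vertex $w \notin A$ would have $v$ as its only $A$-neighbour, contradicting $\ell^* \in A$.

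It now suffices to verify that $A' := (A \setminus \{v_4\}) \cup \{w\}$ is a free set of $T'$: it has the same size as $A$ and does not contain $v_4$, so it is a maximum free set of $T'$ omitting $v_4$ --- the desired contradiction. All neighbours of $w$ lie in $T_v$, so $w$ is not adjacent to $v_4$; hence deleting $v_4$ from $A$ and inserting $w$ can only decrease the number of $A$-neighbours of every vertex except the neighbours of $w$ themselves. The neighbours of $w$ are $v$, which is not in $A'$, and the leaf-children of $w$, exactly one of which ($\ell^*$) lies in $A'$ because $w \notin A$ and $A$ is free. Thus every vertex has at most one neighbour in $A'$, as required.

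The only points that need genuine care are (i) the bound on the depth of $T_v$, which is where the choice of a longest path is used, and (ii) the deduction that $v \notin A$, which chains the freeness of $A$ through $v_4$, $v$, $w$ and relies on the strengthened form of the maximum free set from Claim \ref{leafs_free}; without $v \notin A$ the exchange producing $A'$ would fail at $w$. Everything else is routine bookkeeping, and --- as the argument never uses the precise number of children of $v$ or of leaf-children of $w$ --- it covers all possible shapes of $T_v$ uniformly, including the degenerate case where $v$ lies on a pendant path.
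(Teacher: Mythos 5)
Your proof is correct and follows essentially the same route as the paper: bound the depth of $T_v$ by $2$ via the choice of a longest path, then exchange $v_4$ for a child of $v$ in a maximum free set of $T'$ to contradict the standing assumption that every maximum free set of $T'$ contains $v_4$. You do somewhat more work than necessary --- the exchange already goes through for an \emph{arbitrary} neighbour $u$ of $v$ in $T_v$ (freeness of $A$ together with $v_4\in A$ forces $u\notin A$ and forbids a second $A$-neighbour of $u$ or of $v$), so the appeal to Claim \ref{leafs_free} and the case split on whether the chosen child is an endvertex can be dispensed with.
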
 
\begin{proof}
Let $v\in S$, and $T_v$ the component of $v$ in $T\setminus\{v_4\}$.
Then $T_v$ has depth at most $2$ as a tree rooted in $v$ (by the choice of $v_1,\ldots,v_m$ as a longest path).
Let $A'$ be a free set of maximal size in $T'$, then $v_4\in A'$. If $v$ has a neighbour in $T_v$, $u$, it is either an endevertex, or all of its neighbours except for $v$ are endvertices. Then $(A'\setminus\{v_4\})\cup \{u\}$ is free in $T'$, a contradiction.
Thus $v$ has no neighbours in $T_v$, i.e.~it is an endvertex in $T$.
\end{proof}
Clearly, the claim implies $|S|\le 3$.

\begin{enumerate}[(a)]
\setcounter{enumi}{3}
\item\label{case3d}
$S=\emptyset$ (see Figure \ref{fig_free_3d}). 
\\
Then $|R\cap(V(T)\setminus V(T''))|\le 4$ and  we can add $v_1,v_2$ to a free set of $T''$. Thus
\begin{equation*}
\r(T)\le 4+ \r(T'')\le 2(2+\a(T''))\le 2\a(T).
\end{equation*}
\end{enumerate}

\begin{figure}[t] \centering
\subfloat[Case 3\ref{case3d}]{
\label{fig_free_3d}
\begin{tikzpicture}
\tikzstyle{bel}=[below=0.05]

\draw (0.2,0)--(4,0)--(4.8,0.4);
\draw (4,0)--(4.8,-0.7);
\draw (.2,.6)--(1,0)--(.2,-.6);

\draw [fill=white] (2,0) circle [radius=0.125] ;
\draw [fill=white] (1,0) circle [radius=0.125];
\draw [fill] (0.2,0.6) circle [radius=0.125];
\draw [fill] (0.2,-.6) circle [radius=.125];
\draw [fill] (0.2,0) circle [radius=.125];
\draw [fill=gray] (3,0) circle [radius=0.125];
\draw [fill=gray] (4,0) circle [radius=0.125];

\node [bel] at (2,0) {$v_3$};
\node[bel] at (1,0) {$v_2$}; 
\node[left] at (.2,0.6) {$v_1$};
\node[left] at (.2,0) {$v_1'$};
\node[left] at (.2,-0.6) {$v_1''$};
\node[bel] at (3,0) {$v_4$};
\node[bel] at (4,0) {$v_5$};

\draw [rounded corners=20pt](5,-.9)--(3.5,-.9)--(3.5,.9)--(5,.9);
\draw [rounded corners=20pt](5,-1.2)--(2.5,-1.2)--(2.5,1.2)--(5,1.2);
\boldmath
\node at (3.1,.9){$T'$};
\node at (4.1,0.6){$T''$};
\unboldmath

\end{tikzpicture}
}
\hspace{.5in}
\subfloat[Case 3\ref{case3e}]{
\label{fig_free_3e}
\begin{tikzpicture}
\tikzstyle{bel}=[below=0.05]

\draw (0.2,0)--(4,0)--(4.8,0.4);
\draw (4,0)--(4.8,-0.7);
\draw (.2,.6)--(1,0)--(.2,-.6);
\draw (3.6,-.8)--(3,0)--(2.4,-.8);
\draw (3,-.8)--(3,0);

\draw [fill=white] (2,0) circle [radius=0.125] ;
\draw [fill=white] (1,0) circle [radius=0.125];
\draw [fill] (0.2,0.6) circle [radius=0.125];
\draw [fill] (0.2,-.6) circle [radius=.125];
\draw [fill] (0.2,0) circle [radius=.125];
\draw [fill=gray] (3,0) circle [radius=0.125];
\draw [fill=gray] (4,0) circle [radius=0.125];
\draw [fill] (3,-.8) circle [radius=0.125];
\draw [fill] (2.4,-.8) circle [radius=0.125];
\draw [fill] (3.6,-.8) circle [radius=0.125];

\node [bel] at (2,0) {$v_3$};
\node[bel] at (1,0) {$v_2$}; 
\node[left] at (.2,0.6) {$v_1$};
\node[left] at (.2,0) {$v_1'$};
\node[left] at (.2,-0.6) {$v_1''$};
\node[below right, xshift=2] at (3,0) {$v_4$};
\node[bel] at (4,0) {$v_5$};

\draw [rounded corners=20pt](5,-.9)--(3.7,-.9)--(3.7,.9)--(5,.9);
\draw [rounded corners=20pt](5,1.2)--(2.6,1.2)--(2.1,-1.2);
\boldmath
\node at (4.3,0.6){$T''$};
\node at (3,.8){$T'$};
\unboldmath

\end{tikzpicture}
}
\caption{Cases 3\ref{case3d}, 3\ref{case3e}}
\end{figure}
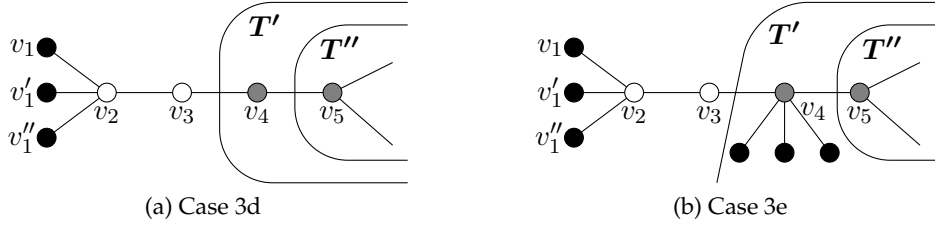

We can assume now that $S\neq \emptyset$.
\begin{enumerate}[(a)]
\setcounter{enumi}{4}
\item\label{case3e}
There is a free set $A''$ of maximal size in $T''$ with $v_5\notin A''$ (see Figure \ref{fig_free_3e}).
\\
Then $|R\cap (V(T)\setminus V(T''))|\le 6$ and the union of $A''$ with $v_1$, $v_2$ and an endvertex from $S$ is free, thus
\begin{equation*}
\r(T)\le 6+\r(T'')\le 2(3+\a(T''))\le 2\a(T).
\end{equation*}
\end{enumerate}
Thus we can assume that every maximal free set in $T''$ contains $v_5$.

\begin{enumerate}[(a)]
\setcounter{enumi}{5}
\item\label{case3f}
$v_5$ has degree $2$ in $T$.\\
Let $T'''$ be the component of $v_6$ in $T\setminus \{v_5\}$ (see Figure \ref{fig_free_3f}). Then, as in the previous case,
\begin{equation*}
\r(T)\le 6+\r(T''')\le 2(3+\a(T'''))\le 2\a(T).
\end{equation*}

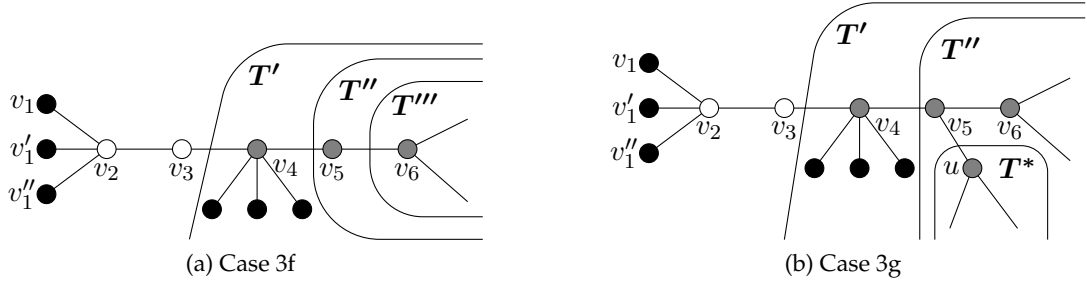
\begin{figure}[t] \centering
\subfloat[Case 3\ref{case3f}]{
\label{fig_free_3f}
\begin{tikzpicture}
\tikzstyle{bel}=[below=0.05]

\draw (0.2,0)--(5,0)--(5.8,0.4);
\draw (5,0)--(5.8,-0.7);
\draw (.2,.6)--(1,0)--(.2,-.6);
\draw (3.6,-.8)--(3,0)--(2.4,-.8);
\draw (3,-.8)--(3,0);

\draw [fill=white] (2,0) circle [radius=0.125] ;
\draw [fill=white] (1,0) circle [radius=0.125];
\draw [fill] (0.2,0.6) circle [radius=0.125];
\draw [fill] (0.2,-.6) circle [radius=.125];
\draw [fill] (0.2,0) circle [radius=.125];
\draw [fill=gray] (3,0) circle [radius=0.125];
\draw [fill=gray] (4,0) circle [radius=0.125];
\draw [fill=gray] (5,0) circle [radius=0.125];
\draw [fill] (3,-.8) circle [radius=0.125];
\draw [fill] (2.4,-.8) circle [radius=0.125];
\draw [fill] (3.6,-.8) circle [radius=0.125];

\node [bel] at (2,0) {$v_3$};
\node[bel] at (1,0) {$v_2$}; 
\node[left] at (.2,0.6) {$v_1$};
\node[left] at (.2,0) {$v_1'$};
\node[left] at (.2,-0.6) {$v_1''$};
\node[below right, xshift=2] at (3,0) {$v_4$};
\node[bel] at (4,0) {$v_5$};
\node[bel] at (5,0) {$v_6$};

\draw [rounded corners=25pt](6,-1.2)--(3.75,-1.2)--(3.75,1.2)--(6,1.2);
\draw [rounded corners=20pt](6,-.9)--(4.5,-.9)--(4.5,.9)--(6,.9);
\draw [rounded corners=20pt](6,1.4)--(2.7,1.4)--(2.1,-1.2);
\boldmath
\node at (4.35,0.8){$T''$};
\node at (5.1,.6){$T'''$};
\node at (3.1,1){$T'$};
\unboldmath

\end{tikzpicture}
}
\hspace{.5in}
\subfloat[Case 3\ref{case3g}]{
\label{fig_free_3g}
\begin{tikzpicture}
\tikzstyle{bel}=[below=0.05]

\draw (0.2,0)--(5,0)--(5.8,0.4);
\draw (5,0)--(5.8,-0.7);
\draw (.2,.6)--(1,0)--(.2,-.6);
\draw (3.6,-.8)--(3,0)--(2.4,-.8);
\draw (3,-.8)--(3,0);
\draw (4,0)--(4.5,-.8);
\draw (4.2,-1.6)--(4.5,-.8)--(5.1,-1.6);

\draw [fill=white] (2,0) circle [radius=0.125] ;
\draw [fill=white] (1,0) circle [radius=0.125];
\draw [fill] (0.2,0.6) circle [radius=0.125];
\draw [fill] (0.2,-.6) circle [radius=.125];
\draw [fill] (0.2,0) circle [radius=.125];
\draw [fill=gray] (3,0) circle [radius=0.125];
\draw [fill=gray] (4,0) circle [radius=0.125];
\draw [fill=gray] (5,0) circle [radius=0.125];
\draw [fill] (3,-.8) circle [radius=0.125];
\draw [fill] (2.4,-.8) circle [radius=0.125];
\draw [fill] (3.6,-.8) circle [radius=0.125];
\draw [fill=gray] (4.5,-.8) circle [radius=0.125];

\node [bel] at (2,0) {$v_3$};
\node[bel] at (1,0) {$v_2$}; 
\node[left] at (.2,0.6) {$v_1$};
\node[left] at (.2,0) {$v_1'$};
\node[left] at (.2,-0.6) {$v_1''$};
\node[below right, xshift=2] at (3,0) {$v_4$};
\node[below right, xshift=1] at (4,0) {$v_5$};
\node[bel] at (5,0) {$v_6$};
\node[left, xshift=-1] at (4.5,-.8) {$u$};

\draw [rounded corners=20pt](3.8,-1.75)--(3.8,1.2)--(6,1.2);
\draw [rounded corners=20pt](6,1.4)--(2.5,1.4)--(2,-1.75);
\draw [rounded corners=10pt] (4,-1.75)--(4,-.5)--(5.5,-.5)--(5.5,-1.75);

\boldmath
\node at (4.35,0.8){$T''$};
\node at (2.9,1){$T'$};
\node at (5.1,-.8){$T^*$};
\unboldmath

\end{tikzpicture}
}
\caption{Cases 3\ref{case3f}, 3\ref{case3g}}
\end{figure}

\item\label{case3g}
$v_5$ has a neighbour $u\neq v_4,v_6$. \\
Let $T^*$ be the component of $u$ in $T\setminus\{v_5\}$ (see Figure \ref{fig_free_3g}).
The following claim can be proved similarly to the proofs of Claims \ref{claim_free_depth_two}, \ref{claim_free_S_endvertices}, using the above assumptions.
\begin{claim}
$T^*$ has depth $3$ as a tree rooted in $u$.
\end{claim}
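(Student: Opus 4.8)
The plan is to prove the two halves of the claim separately, just as in Claims~\ref{claim_free_depth_two} and~\ref{claim_free_S_endvertices}: I would deduce ``the depth of $T^*$ rooted at $u$ is at most $3$'' purely from the fact that $v_1,\ldots,v_m$ is a longest path, and ``the depth is at least $3$'' from the standing assumption of this case, namely that every maximum free set of $T''$ contains $v_5$ (the assumption with which we entered the situation right after Case~3\ref{case3e}).

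For the upper bound, suppose $T^*$ contains a path of length $d$ starting at $u$. Since $u$ lies in a component of $T\bs\{v_5\}$ disjoint from $v_6,\ldots,v_m$, appending the path $u,v_5,v_6,\ldots,v_m$ yields a path of $T$ of length $d+1+(m-5)=d+m-4$; as $v_1,\ldots,v_m$ is a longest path this forces $d+m-4\le m-1$, i.e.\ $d\le 3$.

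For the lower bound I would argue by contradiction: suppose $T^*$ has depth at most $2$, and let $A''$ be a maximum free set of $T''$, so $v_5\in A''$ by assumption. Write $A^*=A''\cap V(T^*)$ and $C=A''\bs(V(T^*)\cup\{v_5\})$, so that $|A''|=|C|+|A^*|+1$; moreover, since $u$ is a neighbour of $v_5$, $v_5\in A''$ and $A''$ is free, $v_5$ is the unique neighbour of $u$ lying in $A''$, so no child of $u$ lies in $A^*$, and if $u\in A^*$ then $A^*=\{u\}$. The idea is to exchange $\{v_5\}\cup A^*$ for a free set $B^*\subseteq V(T^*)$ with $|B^*|\ge|A^*|+1$ such that $C\cup B^*$ is still free in $T''$; this produces a free set of $T''$ of size at least $|A''|=\a(T'')$ that avoids $v_5$, contradicting the assumption. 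To build $B^*$ I would use that $T^*$ has depth at most $2$: if $u$ has a non-leaf child $w$, take $w$, one child of $w$, and one child of each other non-leaf child of $u$ (this has size at least $|A^*|+1$, since $|A^*|$ is at most the number of non-leaf children of $u$, and it avoids $u$); if all children of $u$ are leaves, take $\{u,\ell\}$ or a single leaf child of $u$, according to whether $u\in A''$. Since $v_5\notin C\cup B^*$, the only freeness condition that could break is the one at $v_5$, and in each case $B^*$ is chosen so that $v_5$ retains at most one neighbour in $C\cup B^*$.

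The hard part will be making this exchange go through in every degenerate shape of the shallow tree $T^*$: one really does have to distinguish its depth ($0$, $1$ or $2$) and whether $u$ itself belongs to $A''$, and verify $C\cup B^*$ is free each time. The only genuinely stubborn configuration is $u$ an endvertex, where $T^*=\{u\}$ leaves no room for the swap; I expect this sub-case has to be excluded separately by showing it forces one of Cases~3\ref{case3d}--3\ref{case3f}. Everything else is routine bookkeeping, entirely parallel to the proofs of Claims~\ref{claim_free_depth_two} and~\ref{claim_free_S_endvertices}.
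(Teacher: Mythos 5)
Your overall strategy is exactly the one the paper gestures at (the paper itself gives no details, saying only that the claim ``can be proved similarly to'' Claims~\ref{claim_free_depth_two} and~\ref{claim_free_S_endvertices}): the upper bound on the depth comes from extending a deepest path of $T^*$ through $u,v_5,v_6,\ldots,v_m$ and comparing with the longest path, and the lower bound is an exchange argument against the standing assumption that every maximum free set of $T''$ contains $v_5$. Your length count for the upper bound is correct, and your exchange for depths $1$ and $2$ checks out: since $v_5\in A''$, no child of $u$ lies in $A''$, each non-leaf child of $u$ contributes at most one grandchild to $A^*$, so $|A^*|$ is at most the number of non-leaf children of $u$ (or else $A^*=\{u\}$), and your $B^*$ beats it by one while leaving $v_5$ with at most one neighbour in $C\cup B^*$. (Depth $1$ in fact has a one-line proof you could reuse: a leaf child $e$ of $u$ is an endvertex of $T''$ at distance $2$ from $v_5$, its parent $u$ has $v_5$ as its unique neighbour in $A''$, so $(A''\setminus\{v_5\})\cup\{e\}$ is already a maximum free set of $T''$ avoiding $v_5$ --- the same swap the paper uses for its final contradiction in Case~3\ref{case3g}.)

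The genuine gap is the case you flag and then wave at: $T^*=\{u\}$, i.e.\ $u$ an endvertex adjacent to $v_5$. There the only available swap is $(A''\setminus\{v_5\})\cup\{u\}$, and it fails to be free exactly when $v_5$ has a neighbour $y\neq u$ with $y\in A''$, since $v_5$ then acquires the two neighbours $y$ and $u$. Nothing in the standing assumptions forbids this: adjacent vertices may coexist in a free set, and for instance on the path $u-v_5-v_6$ every maximum free set contains $v_5$ even though $v_5$ has the endvertex neighbour $u$. Your hope that this configuration ``forces one of Cases~3\ref{case3d}--3\ref{case3f}'' is not substantiated --- none of those cases concerns endvertex neighbours of $v_5$ --- and closing it appears to need either a more delicate exchange (removing $y$ as well and recompensating elsewhere in $T''$) or an appeal to the global structure accumulated in Case 3 (the conditions on $T'$ and $S$, or a re-choice of the longest path). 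Until the depth-$0$ case is closed, the claim, and with it the treatment of Case~3\ref{case3g}, is not proved.
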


By considering a longest path going through $u$ instead of $v_4$, we can assume that the component of $u$ in $T\setminus \{v_5\}$ satisfies the same conditions as the component of $v_4$.
However, in this case $T''$ has an endvertex in distance $2$ from $v_5$, a contradiction to the assumption that every maximum free set of $T''$ contains $v_5$.

\end{enumerate}

\end{proof}

The following example shows that Theorem  \ref{free} is sharp. 

\begin{figure}[!h]\centering
\begin{tikzpicture}
\tikzstyle{bel}=[below=0.05]

\draw(0,0)--(7,0);
\draw(9,0)--(12,0);
\draw[fill](7.25,0) circle [radius=0.025];
\draw[fill](7.5,0) circle [radius=0.025];
\draw[fill](7.75,0) circle [radius=0.025];
\draw[fill](8,0) circle [radius=0.025];
\draw[fill](8.25,0) circle [radius=0.025];
\draw[fill](8.5,0) circle [radius=0.025];
\draw[fill](8.75,0) circle [radius=0.025];

\draw[fill](0,0) circle [radius=0.125];
\draw[fill](0,-1) circle [radius=0.125];
\draw[fill](1,-2) circle [radius=0.125];
\draw[fill](-1,-2) circle [radius=0.125];
\draw[fill](1,-3) circle [radius=0.125];
\draw[fill](.5,-3) circle [radius=0.125];
\draw[fill](1.5,-3) circle [radius=0.125];
\draw[fill](-1,-3) circle [radius=0.125];
\draw[fill](-.5,-3) circle [radius=0.125];
\draw[fill](-1.5,-3) circle [radius=0.125];

\draw (0,0)--(0,-1)--(-1,-2)--(-0.5,-3);
\draw (0,-1)--(1,-2)--(1.5,-3);
\draw (1,-3)--(1,-2)--(.5,-3);
\draw (-1,-3)--(-1,-2)--(-1.5,-3);

\node[above=0.05] at (0,0) {$v_1$};
\node[right=0.05, yshift=-1.5] at (0,-1) {$u_1$};
\node[right=0.05, yshift=-1.5] at (-1,-2) {$w_1$};
\node[right=0.05, yshift=-1.5] at (-0.5,-3) {$x_1$};
\node[right=0.05, yshift=-1.5] at (1.5,-3) {$y_1$};

\draw[fill](4,0) circle [radius=0.125];
\draw[fill](4,-1) circle [radius=0.125];
\draw[fill](5,-2) circle [radius=0.125];
\draw[fill](3,-2) circle [radius=0.125];
\draw[fill](5,-3) circle [radius=0.125];
\draw[fill](4.5,-3) circle [radius=0.125];
\draw[fill](5.5,-3) circle [radius=0.125];
\draw[fill](3,-3) circle [radius=0.125];
\draw[fill](3.5,-3) circle [radius=0.125];
\draw[fill](2.5,-3) circle [radius=0.125];

\draw (4,0)--(4,-1)--(3,-2)--(3.5,-3);
\draw (4,-1)--(5,-2)--(5.5,-3);
\draw (5,-3)--(5,-2)--(4.5,-3);
\draw (3,-3)--(3,-2)--(2.5,-3);

\node[above=0.05] at (4,0) {$v_2$};
\node[right=0.05, yshift=-1.5] at (4,-1) {$u_2$};
\node[right=0.05, yshift=-1.5] at (3,-2) {$w_2$};
\node[right=0.05, yshift=-1.5] at (3.5,-3) {$x_2$};
\node[right=0.05, yshift=-1.5] at (5.5,-3) {$y_2$};

\draw[fill](12,0) circle [radius=0.125];
\draw[fill](12,-1) circle [radius=0.125];
\draw[fill](13,-2) circle [radius=0.125];
\draw[fill](11,-2) circle [radius=0.125];
\draw[fill](13,-3) circle [radius=0.125];
\draw[fill](12.5,-3) circle [radius=0.125];
\draw[fill](13.5,-3) circle [radius=0.125];
\draw[fill](11,-3) circle [radius=0.125];
\draw[fill](11.5,-3) circle [radius=0.125];
\draw[fill](10.5,-3) circle [radius=0.125];

\draw (12,0)--(12,-1)--(11,-2)--(11.5,-3);
\draw (12,-1)--(13,-2)--(13.5,-3);
\draw (13,-3)--(13,-2)--(12.5,-3);
\draw (11,-3)--(11,-2)--(10.5,-3);

\node[above=0.05] at (12,0) {$v_m$};
\node[right=0.05, yshift=-1.5] at (12,-1) {$u_m$};
\node[right=0.05, yshift=-1.5] at (11,-2) {$w_m$};
\node[right=0.05, yshift=-1.5] at (11.5,-3) {$x_m$};
\node[right=0.05, yshift=-1.5] at (13.5,-3) {$y_m$};
\end{tikzpicture}
\caption{Sharpness of Theorem \ref{free}}
\label{fig_free}
\end{figure}
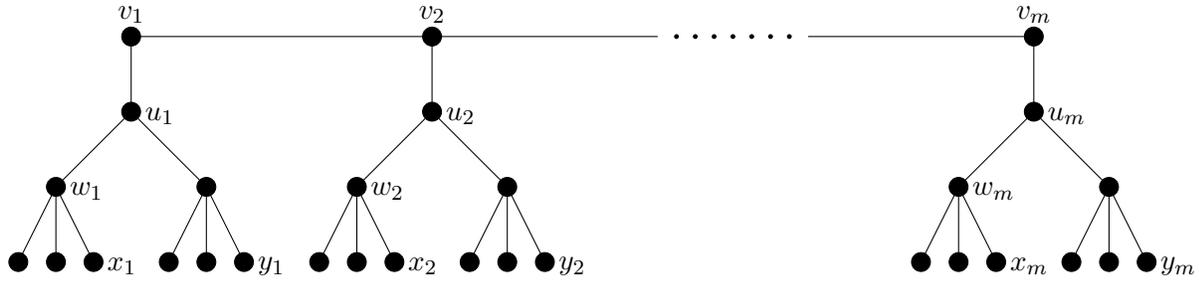

This is sequence of trees $T_m$, $m\ge 1$, with $10m$ vertices.
$\r(T)\ge 6m$ (the set of all endvertices is a.r.).
Let $A$ be a free set of $T_m$ with maximal size. We can assume that $A$ contains the endvertices $x_1,\ldots,x_m,y_1,\ldots,y_m$.
Thus $u_1,\ldots,u_m\notin A$.
Also, $A$ contains at most one of the $3$ neighbours of $u_i$ for each $i\in[m]$.
Hence $\{x_1,\ldots,x_m\}\cup\{y_1,\ldots,y_m\}\cup\{w_1,\ldots,w_m\}$ is a free set of maximal size, so $\a(T)=3m$.
By Theorem \ref{free}, $\r(T)\le 2\a(T)=6m$. Thus $\r(T)=6m=2\a(T)$.

\section{Concluding Remarks}\label{section_conclusion}
In this paper we proved two results about the Radon number for $P_3$-convexity in graphs. 
It may be interesting to consider these problems for general graphs. 
Regarding Theorem \ref{partition}, it is still an open problem to determine whether Eckhoff's conjecture holds for $P_3$-convexity in all graphs. 
We showed that the inequality $\r(G)\le 2\a(G)$ from Theorem \ref{free}, does not hold for all graphs $G$, but it may still be the case that a similar but weaker inequality holds in general.
Furthermore, for both results, it would  be interesting to characterize the trees for which the results hold with equality.

\bibliography{bib}

\begin{thebibliography}{10}

\bibitem{bukh}
B.~Bukh.
\newblock Radon partitions in convexity spaces, 2010.
\newblock arXiv:1009.2384v1 [math.CO].

\bibitem{radon}
M.~C. Dourado, D.~Rautenbach, V.~Fernandes dos Santos, P.~M. Sch\"afer, J.~L.
  Szwarcfiter, and A.~Toman.
\newblock An upper bound on the {$P_3$}-{R}adon number.
\newblock {\em Discrete Math.}, 312:2433--2437, 2012.

\bibitem{part_conj}
J.~Eckhoff.
\newblock The partition conjecture.
\newblock {\em Discrete Math.}, 221:61--78, 2000.

\bibitem{erdos}
P.~Erd\H{o}s, E.~Fried, A.~Hajnal, and E.C. Milner.
\newblock Some remarks on simple tournaments.
\newblock {\em Algebra Universalis}, 2:238--245, 1972.

\bibitem{erdos2}
P.~Erd\H{o}s, A.~Hajnal, and E.C. Milner.
\newblock Simple one-point extensions of tournaments.
\newblock {\em Mathematika}, 19:57--62, 1972.

\bibitem{jamison}
R.~E. Jamison.
\newblock Partition numbers for trees and ordered sets.
\newblock {\em Pac. J. Math}, 96:115--140, 1981.

\bibitem{moon}
J.W. Moon.
\newblock Embedding tournaments in simple tournaments.
\newblock {\em Discrete Math.}, 2:389--395, 1972.

\bibitem{classical_radon}
J.~Radon.
\newblock Mengen konvexer {K}\"orper, die einen gemeinsamen {P}unkt enthalten.
\newblock {\em Math. Ann.}, 83:113--115, 1921.

\bibitem{tverberg}
H.~Tverberg.
\newblock A generelization of {R}adon's theorem.
\newblock {\em J. Lond. Math. Soc.}, 41:123--128, 1966.

\bibitem{Varlet}
J.~C. Varlet.
\newblock Convexity in tournaments.
\newblock {\em Bull. Soc. R. Sci. de Li\`ege}, 45:570--586, 1976.

\end{thebibliography}
\bibliographystyle{plain}
\end{document}